\documentclass[a4paper,11pt]{article} 
\usepackage{amsmath,amssymb}
\usepackage{graphicx,url}

\newcommand{\RR}{\mathbb{R}}
\newcommand{\NN}{\mathbb{N}}

\newcommand{\ZZ}{\mathbb{Z}}

\newtheorem{Theorem}{Theorem}
\newtheorem{Proposition}{Proposition}
\newtheorem{Definition}{Definition}

\newtheorem{Example}{Example}

\title{Self-similar Delone sets and Pisot numbers}
\author{Christoph Bandt and Yves Meyer}
\begin{document}

\maketitle

\begin{abstract}
We consider Delone point patterns with self-similarity. Under mild conditions, the similarity factor is a Pisot number if and only if the pattern is uniformly discrete. The classical case is a Meyer set $\Lambda$ with $\Lambda\supset \theta\Lambda$ for some $\theta>1,$ for which $\theta$ must be a Pisot number or a Salem number. When $\Lambda$ contains several similar copies of itself, the case of a Salem number drops out for $\theta<2.$ On the other hand, strictly self-similar patterns with a Pisot factor must be Meyer sets. Various examples are given. 
\end{abstract}

\section{When uniformly discrete patterns have a Pisot factor} \label{s1}
This paper studies the interplay between geometric and algebraic properties of discrete point sets $\Lambda\subset \mathbb{R}^n.$  We start with some basic definitions.  Let 
\begin{equation} h(\Lambda)=\inf\{|x-y|, \,x\in \Lambda, y\in \Lambda, x\neq y\} \ . \label{hh}
\end{equation}
We say that $\Lambda$ is uniformly discrete if $h(\Lambda)>0.$  A Delone set is a uniformly discrete set $\Lambda$ which is relatively dense: there is a positive $R$ such that for every $x\in \mathbb{R}^n$ one can find a $\lambda\in \Lambda$ with $|x-\lambda|\leq R.$   A Delone set $\Lambda\subset \mathbb{R}^n$ is called a Meyer set if there exists a finite set $E\subset \mathbb{R}^n$ such that $\Lambda-\Lambda \subset \Lambda+E.$ By $\Lambda-\Lambda$ we denote the set of all differences $x-y, x,y \in \Lambda.$  Lagarias proved that a Delone set $\Lambda$ is a Meyer set if and only if  $\Lambda-\Lambda$ is also a Delone set \cite{Lag96,Mo97}. 

Lattices like $\ZZ^n,$ for which $\Lambda-\Lambda=\Lambda,$ are the simplest examples of Meyer sets. We are interested in aperiodic examples which were used in quasicrystal modelling. Their construction usually involves some kind of self-similarity.  Throughout we consider a real similarity factor $\theta >1.$ The simplest form of self-similarity of $\Lambda$ is given by the relation  
\begin{equation} \Lambda\supset \theta\Lambda \ . \label{e2}\end{equation}
For a Meyer set, this condition implies that $\theta$ is either a Pisot or a Salem number \cite[Chapter I]{Meyer72}. These are algebraic integers, that is, roots of polynomials with integer coefficients and leading coefficient one. $\theta$ is a Pisot number if all other roots $\theta'$ have modulus smaller than one. For a Salem number the roots $\theta'$ have modulus less or equal to one, and at least one root should have modulus 1, so that the two types of numbers are distinct. 

In this essay we are looking for the Delone sets $\Lambda$ which satisfy 
\begin{equation} \Lambda = \theta\Lambda +F \ \mbox{ where $F$ is a finite set.}
\label{selF}\end{equation}

{\bf Conjecture } \emph{If a Delone set $\Lambda$ fulfils \eqref{selF} then $\theta$ must be a Pisot number, and $\Lambda$ must be a Meyer set.} \medskip

The second conjecture is a consequence of the first, as shown by Theorem \ref{t3} below under natural assumptions. It seems interesting that one inclusion of  \eqref{selF} follows from \eqref{e2}.

\begin{Proposition} If a Meyer set fulfils $\Lambda\supset\theta\Lambda$ then there is a finite set $F$ with $ \Lambda \subseteq \theta\Lambda +F .$
\end{Proposition}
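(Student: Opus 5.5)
Since $\theta\Lambda\subset\Lambda$, the set $\theta\Lambda$ is a Delone set: it is uniformly discrete because $\Lambda$ is, and it is relatively dense with constant $\theta R$, where $R$ is the relative density constant of $\Lambda$. For every $x\in\Lambda$ we therefore find $y\in\Lambda$ with $|x-\theta y|\le \theta R$. So $x-\theta y$ lies in the ball $B$ of radius $\theta R$ around the origin. The claim $\Lambda\subseteq\theta\Lambda+F$ will follow once we show that the differences $x-\theta y$ produced this way take only finitely many values. We then put $F=\{x-\theta y : x\in\Lambda,\ y\in\Lambda,\ |x-\theta y|\le\theta R\}$.

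The key step is to locate all such differences inside a single uniformly discrete set. Since $\theta y\in\Lambda$, we have $x-\theta y\in\Lambda-\Lambda$. By the Meyer property, $\Lambda-\Lambda\subset\Lambda+E$ with $E$ finite, and $\Lambda+E$ is a finite union of translates of a uniformly discrete set. Alternatively, we can use Lagarias' theorem quoted above: $\Lambda-\Lambda$ is itself a Delone set, hence uniformly discrete. In either version, $\Lambda-\Lambda$ has only finitely many points in the bounded ball $B$. For $\Lambda+E$ this holds because each translate $\Lambda+e$ meets $B$ in finitely many points by uniform discreteness. Hence $F\subset(\Lambda-\Lambda)\cap B$ is finite, and $\Lambda\subseteq\theta\Lambda+F$.

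The only point needing care is the finiteness of $(\Lambda-\Lambda)\cap B$. Uniform discreteness of $\Lambda$ alone would not suffice, since $\Lambda-\Lambda$ might accumulate. This is exactly where the Meyer hypothesis enters, through $\Lambda-\Lambda\subset\Lambda+E$ or through Lagarias' characterization. Once this is in hand, the rest is immediate. Note also that we may take $F\subset\Lambda-\Lambda$, and that $0\in F$ whenever $\theta\Lambda\cap\Lambda\neq\emptyset$.
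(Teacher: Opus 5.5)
Your proof is correct and follows the paper's argument. Relative density of $\theta\Lambda$ gives $|x-\theta y|$ bounded, $\theta y\in\Lambda$ puts $x-\theta y$ in $\Lambda-\Lambda$, and uniform discreteness of $\Lambda-\Lambda$ (via Lagarias or $\Lambda-\Lambda\subset\Lambda+E$) leaves only finitely many differences in that ball; your explicit radius $\theta R$ is slightly more careful than the paper's unspecified $R$.
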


\noindent {\it Proof. } Let $\lambda\in\Lambda .$ Since $\theta\Lambda$ is relatively dense there exists a $\lambda'\in\Lambda$ such that $|\lambda -\theta\lambda'|\le R.$ But $y=\theta\lambda'\in\Lambda$ which implies that $\lambda-y\in F$ where $F$ is the intersection between $\Lambda-\Lambda$  and the ball centered
at 0 with radius $R.$ Since $\Lambda-\Lambda$ is Delone, $F$ is finite.
\hfill $\Box$ \smallskip 

We now prove the conjecture for $\theta<2.$ 
 
\begin{Theorem}\label{t1}
If $\Lambda$ is a uniformly discrete solution of \eqref{selF} and $\theta<2,$  then $\theta$ is a Pisot number. 
\end{Theorem}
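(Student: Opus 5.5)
The plan is to reduce to a one-dimensional digit set and then invoke the classical result of \cite[Chapter I]{Meyer72} quoted above: a Meyer set $X$ with $\theta X\subset X$ forces $\theta$ to be Pisot or Salem. The condition $\theta<2$ will be used twice: once to get relative density, and once to exclude the Salem case.

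\emph{Step 1: reduce to digits $\{0,1\}$.} Iterating \eqref{selF} gives
\[\Lambda=\theta^n\Lambda+\theta^{n-1}F+\dots+\theta F+F\]
for every $n$. Fix $\lambda_0\in\Lambda$. Then all points $\theta^n\lambda_0+\sum_{k<n}f_k\theta^k$ with $f_k\in F$ lie in $\Lambda$. Hence any nonzero number $\sum_{k<n}(f_k-f_k')\theta^k$ has modulus at least $h(\Lambda)$.

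We need $F$ to contain two points $a\neq b$. This follows from counting: the number of points of $\Lambda$ in a ball of radius $r$ is at least $c\,r^n$ because $\Lambda$ is Delone. The same count for $\theta\Lambda+F$ is at most $C\,\#F\,(r/\theta)^n$, so $\#F\ge c\,\theta^n/C$. Letting $r\to\infty$ along suitable balls forces $\#F\ge 2$.

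Put $d=a-b$ and restrict each $f_k$ to $\{a,b\}$. Then for every polynomial $P$ with coefficients in $\{-1,0,1\}$, either $P(\theta)=0$ or $|P(\theta)|\ge h(\Lambda)/|d|$. So with $X=\{\sum_{k<n}a_k\theta^k:\ a_k\in\{0,1\},\ n\ge 0\}$, the difference set $X-X=Y$ (the $\{-1,0,1\}$-sums) is uniformly discrete. Clearly $\theta X\subset X$.

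\emph{Step 2: $X$ is a Meyer set.} Since $\theta<2$, the greedy algorithm shows that consecutive points of $X\cap[0,\infty)$ are at distance at most $1$. Indeed, $\sum_{k<n}\theta^k\ge\theta^n-1$ when $\theta\le2$, so adding the next power never jumps by more than the previous gaps. Thus $X$ is relatively dense in $[0,\infty)$ and $X-X$ is uniformly discrete.

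To get a genuine Delone set in $\RR$, I would pass to $X\cup(-X)$, or to $Y$ itself, which satisfies $Y=-Y$ and $\theta Y+\{-1,0,1\}=Y$. The set $Y$ is relatively dense in $\RR$ and uniformly discrete. Its difference set is a set of $\{-2,\dots,2\}$-sums. I still have to control it, and I would do this via Lagarias' criterion applied to the half-line set $X$, which is the cleaner route. Either way, Meyer's theorem then says that $\theta$ is a Pisot or a Salem number.

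\emph{Step 3: exclude Salem numbers.} This is the main obstacle. Meyer's argument shows more than the dichotomy: a Meyer set contained in $\ZZ[\theta]$ has bounded images under each conjugate embedding $x\mapsto x'$ with $|\theta'|=1$. It is a subset of a model set with a bounded window.

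Suppose $\theta$ is Salem with a conjugate $\theta'=e^{i\varphi}$. Then $\varphi/\pi$ is irrational, since $\theta'$ is not a root of unity. For each $k$ choose $a_k=1$ exactly when $\mathrm{Re}(\theta'^k)>0$. Equidistribution of $k\varphi$ gives
\[\mathrm{Re}\sum_{k<n}a_k\theta'^k\sim n/\pi\to\infty.\]
So the conjugate images of elements of $X$ are unbounded, which is a contradiction. Hence $\theta$ is Pisot.

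The delicate points are twofold. First, one must make sure that the bounded-window property really follows from Meyer's proof in \cite{Meyer72} for the set $X$ used here. Second, one must check that the conjugation $x\mapsto x'$ is well defined on $\ZZ[\theta]$ even though coincidences $P(\theta)=0$ occur. This holds because it is a ring homomorphism of $\ZZ[\theta]$, and such coincidences must occur anyway since $2^n$ sums fit into an interval of length $O(\theta^n)$.
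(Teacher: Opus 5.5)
Your Step 1 is exactly the paper's reduction (the proof of Theorem~\ref{t1a} with $m=1$), and it is fine. It shows that every nonzero $\{-1,0,1\}$-sum satisfies $|P(\theta)|\ge h(\Lambda)/|d|$. In other words, $0$ is isolated in $Y=R(\theta,1)$, which is the same as saying $X=S(\theta,1)$ is uniformly discrete. The paper stops here and applies Feng's theorem.

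Your attempt to replace Feng by Meyer's Pisot-or-Salem theorem breaks down at this point. You assert that $X-X=Y$ is uniformly discrete, but that is a stronger statement. Uniform discreteness of $Y$ requires $|P(\theta)-Q(\theta)|$ to be bounded below for distinct values. That is a lower bound for all nonzero sums with digits in $\{-2,\dots,2\}$, i.e.\ $0$ must be isolated in $R(\theta,2)$. Nothing in Step 1 gives this. When $F$ has only two points (e.g.\ $F=\{0,1\}$ as in \eqref{sel2}), iterating \eqref{selF} only produces digit differences $0,\pm d$ within one orbit. Differences between orbits of different seeds carry uncontrolled terms $\theta^n(\lambda_0-\lambda_1)$.

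Lagarias' criterion cannot supply the missing bound either. It is an equivalence whose hypothesis is precisely that $X-X$ is uniformly discrete. So your ``cleaner route via $X$'' needs the same $\{-2,\dots,2\}$ control that you admit is missing for $Y$. Without it, neither $X$ nor $Y$ is known to be a Meyer set, and Meyer's theorem cannot be invoked.

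This is not a technicality. For $\theta<2$, getting from ``$0$ isolated in $R(\theta,1)$'' to ``$\theta$ is Pisot'' is exactly the problem going back to Erd\H{o}s, Jo\'o and Komornik that Feng solved \cite{Feng16}. Uniform discreteness of $R(\theta,1)$ only follows afterwards, and no classical shortcut is known. That is why the paper's own classical route, Theorem~\ref{Bu}, has to assume that $S(\theta,4m)$ is uniformly discrete. That stronger hypothesis makes $R(\theta,m)$ a Meyer set, and then Theorem~\ref{pi} finishes.

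Two secondary problems remain even if Step 2 were granted.

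\emph{Step 3.} Your claim that a Meyer set contained in $\ZZ[\theta]$ has bounded images under the unimodular conjugate embeddings is false as stated. $\ZZ$ itself is a Meyer subset of $\ZZ[\theta]$ and is fixed pointwise by every conjugation. You would need an argument that uses the specific set $X$, for instance via dual elements $y$ with $\mathrm{dist}(xy,\ZZ)$ small for all $x\in X$. Alternatively, if $R(\theta,1)$ were known to lie in a Meyer set, Theorem~\ref{pi} would dispose of Salem numbers and all other non-Pisot numbers at once.

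\emph{Counting argument for $\#F\ge 2$.} It does not work as written, because the bound $c\,\theta^n/C$ need not exceed $1$. A simpler argument: if $F=\{f\}$, then $x\mapsto (x-f)/\theta$ maps $\Lambda$ into itself and contracts towards a fixed point, so uniform discreteness forces $\Lambda$ to be a single point.

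The decisive gap, however, is the missing uniform discreteness of $R(\theta,1)$.
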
 

We prove a more general statement for sets $F$ which contain $m+1$ regularly spaced points on a line. Theorem \ref{t1} concerns the case $m=1.$  Note that for $\Lambda\not= \{ 0\}$ the set $F$ in \eqref{selF} must have at least two points. We can assume $0\in F$ by choice of the coordinate origin.  In other words, for $x_0\in F$ the translate $\Lambda'=\Lambda-x_0$ and $F'=F+(\theta-1)x_0$ fulfil \eqref{selF}.  

\begin{Theorem}\label{t1a}
If $\Lambda$ is uniformly discrete and  $\Lambda \supseteq \theta\Lambda + \{ 0,1,...,m\}\cdot f$ for some $f\in \RR^n \setminus\{ 0\}$ and an integer $m>\theta -1$ then $\theta$ is a Pisot number. 
\end{Theorem}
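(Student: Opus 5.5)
The plan is to reduce the geometric hypothesis to a purely arithmetic statement about digit polynomials in $\theta$. After that, the argument follows the classical Pisot--Garsia line: show that $\theta$ is an algebraic integer, and then that every conjugate has modulus less than one.

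\emph{Step 1 (reduction).} Fix $\lambda\in\Lambda$ and iterate the inclusion $k$ times:
\[
\Lambda\supseteq \theta^k\lambda + \Bigl\{\sum_{j<k} d_j\theta^j : d_j\in\{0,\dots,m\}\Bigr\}\cdot f .
\]
Two such points share the term $\theta^k\lambda$. Hence uniform discreteness gives $|Q(\theta)|\ge c:=h(\Lambda)/|f|$ for every nonzero value $Q(\theta)$, where $Q$ is any polynomial with coefficients in $\{-m,\dots,m\}$. Let $X$ be the set of all values $\sum d_j\theta^j$ with digits in $\{0,\dots,m\}$, and $D$ the set of all $Q(\theta)$ with digits in $\{-m,\dots,m\}$. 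Then $D\supseteq X-X$ and $D$ is uniformly discrete, so $D\cap[-A,A]$ is finite for every $A$. Since $m+1>\theta$, a greedy-expansion argument should show that $X$ is relatively dense in $[0,\infty)$ with gaps at most some $g$ (roughly $m$). This is the only place where the hypothesis $m>\theta-1$ enters.

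\emph{Step 2 (algebraic integer).} Take a finite set $D_A=D\cap[-A,A]$ with $A$ large, say $A>m/(\theta-1)$. For $y\in D_A$ the numbers $\theta y-d$, $d\in\{-m,\dots,m\}$, again lie in $D$. One can choose $d$ so that $\theta y-d$ returns to $[-A,A]$; this uses the density of $X$, or directly that each digit step shifts by at most $m$. A cleaner version is to write every element of $D_A$ through its digit string and use the shift map $Q(\theta)\mapsto (Q(\theta)-d_0)/\theta$. Either way, the $\ZZ$-module generated by $D_A\cup\{1\}$ is finitely generated and stable under multiplication by $\theta$. Hence $\theta$ is an algebraic integer. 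Alternatively, $\theta$ is at least algebraic directly: there are $(m+1)^k$ strings but only $O(\theta^k)$ possible values in $X\cap[0,m\theta^k/(\theta-1)]$, so two strings coincide and give a nonzero $Q$ with $Q(\theta)=0$.

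\emph{Step 3 (conjugates).} Suppose a conjugate $\theta'$ satisfies $|\theta'|\ge1$, and let $\sigma$ be the field embedding with $\sigma(\theta)=\theta'$. Since $\sigma$ is injective, the finite set $D_{A}$ has a finite image, so $|Q(\theta')|$ is bounded over all admissible $Q$ with $|Q(\theta)|\le A$. To reach a contradiction I will construct such $Q$ with $|Q(\theta')|$ arbitrarily large. Fix a long top block of digits $d_{k-1},\dots,d_{k-r}$ chosen so that $|\sum d_j\theta'^j|$ is huge; this is possible when $|\theta'|\ge1$ by choosing digits to avoid cancellation, for instance a single nonzero digit or a sign pattern aligned with powers of $\theta'$. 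Then use the relative density from Step 1 to pick a lower-order digit polynomial $P$ of degree below $k-r$ with $P(\theta)$ within $g$ of the required value, so that the total $Q(\theta)$ lies in $[-A,A]$. The difficulty is that the lower part also contributes to $Q(\theta')$ and could cancel the large top part.

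\emph{Main obstacle.} Controlling this cancellation is the hard step. I would handle it by a pigeonhole or counting argument: there are many choices of the top block, but only finitely many values of $Q(\theta')$ are allowed. Hence two different top blocks $T_1,T_2$ with compensating tails $P_1,P_2$ yield the same $\theta'$-value while $(T_1-T_2)(\theta')$ is large. Their difference would then give a polynomial with coefficients in $\{-2m,\dots,2m\}$. To keep within the original digit bound, I would rework this to compare against the uniformly discrete set $X-X$ at a fixed degree, or else invoke the argument of \cite[Chapter I]{Meyer72} applied to the set $D$ and its finite sections.
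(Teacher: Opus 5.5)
Your Step 1 is exactly the paper's reduction. Iterating the inclusion and fixing a point of $\Lambda$ shows that $S(\theta,m)$ is uniformly discrete, i.e.\ that $0$ is isolated in $D=R(\theta,m)$. The paper then simply invokes Feng's theorem, and this cannot be bypassed cheaply: $\Lambda=S(\theta,m)$, $f=1$ is itself an instance of the hypothesis. So the statement is equivalent to Feng's theorem, and your Steps 2--3 amount to reproving it. They do not succeed.

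The first gap is in Step 2, where you pass from ``$0$ is isolated in $D$'' to ``$D$ is uniformly discrete, so $D\cap[-A,A]$ is finite''. Uniform discreteness of $D$ means that $0$ is isolated in $D-D=R(\theta,2m)$, which you do not have. In general a uniformly discrete $X$ can have accumulation points in $X-X$; for example $X=\{10n\}_{n\ge1}\cup\{10n+1+1/n\}_{n\ge1}$. For $X=S(\theta,m)$ this step is not a formality either. By Akiyama and Komornik, local finiteness of $R(\theta,m)$ with $\theta<m+1$ already forces $\theta$ to be Pisot, so ``$0$ isolated $\Rightarrow$ locally finite'' is essentially the gap between their theorem and Feng's. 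Without finiteness of $D_A$ your module argument collapses, and your counting argument only shows that $\theta$ is algebraic, not that it is an algebraic integer. (A minor slip: $\theta y-d$ can be brought back into $[-A,A]$ for all $|y|\le A$ only if $A\le m/(\theta-1)$, not $A>m/(\theta-1)$.)

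Step 3, the heart of the matter, is not carried out. You see that the tail may cancel the large top block, but you do not control it, and your pigeonhole produces polynomials with digits in $\{-2m,\dots,2m\}$, where you have no discreteness at all. The fallback to \cite[Chapter I]{Meyer72} needs $D$ to be a Meyer set, i.e.\ $D-D=R(\theta,2m)$ uniformly discrete. This is precisely why the paper's own classical route, Theorem~\ref{Bu}, has to assume that $S(\theta,4m)$, rather than $S(\theta,m)$, is uniformly discrete. Even then that route only gives ``Pisot or Salem''. Excluding Salem numbers needs an extra ingredient, such as the Salem product argument behind Theorem~\ref{pi}.

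So the missing idea is the whole passage from ``$0$ isolated in $R(\theta,m)$, $m>\theta-1$'' to ``$\theta$ is Pisot''. At this digit bound that is the deep content of \cite{Feng16}. The proof should be your Step 1 followed by an appeal to Feng's theorem.
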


Here $\Lambda$ need not be a Delone or Meyer set. The assertion is a consequence of an important result of Feng \cite{Feng16} who considered the following sets on the real line.
\begin{eqnarray}S(\theta, m)&=&\left\{ s=\sum_0^K \epsilon_k\theta^k\, | \,  K\in\NN , \epsilon_k\in \{0, 1, \ldots, m\} \right\} \ \mbox{ and }\label{Sm}\\
R(\theta, m)=S(\theta, m)-S(\theta, m)&=&\left\{ r=\sum_0^K \epsilon_k\theta^k\, | \,  K\in\NN , \epsilon_k\in \{-m, \ldots, m\} \right\}\, .
\label{Rm}\end{eqnarray}

Note that $S(\theta,m)$ is a solution of \eqref{selF} but not a Delone set since it consists of positive numbers. The difference set $R(\theta,m)$ is a solution of \eqref{selF} with a greater $F,$ and is a Delone set for $m>\theta-1.$ Now we reformulate Theorem 1.2 and Theorem 1.4 of \cite{Feng16} for our purposes. \medskip

\noindent{\bf Feng's Theorem. } \emph{If $S(\theta, m)$ is uniformly discrete (or, equivalently, 0 is isolated in $R(\theta, m)$) and $m>\theta -1$ then $\theta$ is a Pisot number.} \vspace{2ex}

\noindent {\it Proof of Theorem \ref{t1a}. }
We substitute the right-hand side of the assumption into $\theta\Lambda$ to obtain
\[ \Lambda \supseteq \theta [\theta\Lambda + \{ 0,1,...,m\}\cdot f]+ \{ 0,1,...,m\}\cdot f =
\theta^2\Lambda+\{\, \sum_{j=0}^1 \epsilon_j\theta^j\, | \, \epsilon_j\in \{0, 1, \ldots, m\} \,\}\cdot f \ .  \]
Repeating this substitution we get for $k=2,3,...$  
\[\Lambda \supseteq \theta^k \Lambda  + M_k \quad\mbox{ where } M_k=\{\, \sum_{j=0}^k \epsilon_j\theta^j\, | \, \epsilon_j\in \{0, 1, \ldots, m\} \,\}\cdot f \ .\]
We fix a point $c\in\Lambda .$ Then $M_k\subseteq \Lambda-\theta^kc$ implies $h(M_k)\ge h(\Lambda)>0$ for all $k.$ Moreover, the $M_k$ form an increasing sequence. Thus
$h(\bigcup M_k)\ge \inf h(M_k) \ge h(\Lambda) >0  .$ 
Since $\bigcup M_k = S(\theta, m)\cdot f ,$ we can now apply Feng's theorem.
\hfill $\Box$ \medskip

\section{A related proof by classical arguments} \label{s2}
Feng's proof is by no means self-contained. As noted in \cite[Section 1]{Feng16}, it is based on the work of several predecessors including Erd\"os, Jo\'o, Komornik, and Akiyama.  For that reason, we now give a more direct proof of a weaker statement based on classical arguments and the following definition.

\begin{Definition} 
A set $S$ of real numbers is a coherent set of frequencies if there exist a compact interval $J$ and a constant $C$ such that, for every trigonometric sum $P=\sum_{s\in S}a(s)\exp(2\pi i sx),$ we have 
\begin{equation}
\sup_{x\in \mathbb{R}}|P(x)|\leq C\sup_{x\in J}|P(x)|. 
\label{e3}\end{equation}
\end{Definition} 

Equivalently, $S$ is not a coherent set of frequencies if there exists a sequence $J_k$ of intervals whose lengths $l_k$ tend to $\infty ,$ and a sequence $P_k$ of trigonometric polynomials whose frequencies belong to $S,$ such that $\sup_{J_k}|P_k|\to 0$ while $\|P_k\|_{\infty}=1.$
\medskip

A famous theorem of Pisot implies that for any $\theta >1$ which is NOT Pisot, the series $\sum_k \sin^2(\theta^k\pi x)$ diverges whenever $x\neq 0,$ cf.~\cite[Theorem XII, Section I.6.4]{Meyer72}. Salem noted in 1944 the consequence for non-Pisot numbers $\theta$ that the products
\[ Q_k(x)=\cos(2\pi x)\cos(2\pi \theta x)\cdots\cos(2\pi \theta^k x) \]
converge to $0$ uniformly on any compact interval not containing $0.$ Note that the formula $2\cos a_0 \cos a_1 =  \cos (a_0-a_1)+\cos (a_0+a_1)$ can be extended to finite products by induction: $2^k\prod_{j=0}^k\cos a_j =\{ \sum \cos(a_0+\epsilon_1a_1+...+\epsilon_k)\, |\, \epsilon_j\in\{ -1,1\}$ for $j=1,...,k\}\, .$
So $Q_k(x)$ can be written in the additive form
\begin{equation} Q_k(x)=2^{-k} \sum_{i=1}^{2^k} \cos(2\pi x\, p_i(\theta)) \label{qk}
\end{equation}   where the $p_i$ are the $2^k$ polynomials of the form
$p_i(\theta)=1+\sum_{j=1}^k \epsilon_j\theta^j $ with $\epsilon_j\in\{ -1, 1\} \ .$
Using the notation of Section \ref{s1}, the  $p_i$ belong to $R(\theta ,1).$ 

\begin{Theorem}
If  $R(\theta,1)$ is a coherent set of frequencies, then $\theta$ is Pisot. \label{pi}
\end{Theorem}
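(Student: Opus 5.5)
We argue by contraposition: assume $\theta>1$ is not a Pisot number and show that $R(\theta,1)$ is not a coherent set of frequencies, using the equivalent formulation given after the Definition. The polynomials will be the Salem products $Q_k$ in the additive form (\ref{qk}), suitably rescaled. All frequencies of $Q_k$ are of the form $\pm p_i(\theta)$ with $p_i\in R(\theta,1)$. Since $\cos(2\pi x p)=\frac12(e^{2\pi i p x}+e^{-2\pi i p x})$ and $R(\theta,1)$ is symmetric ($-r\in R(\theta,1)$ whenever $r\in R(\theta,1)$), each $Q_k$ is a trigonometric sum with frequencies in $R(\theta,1)$.

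\emph{Step 1 (sup norm).} At $x=0$ every factor of $Q_k$ equals $1$, so $\|Q_k\|_\infty=Q_k(0)=1$.

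\emph{Step 2 (smallness on a long interval).} Salem's consequence of Pisot's theorem, quoted above, says that $Q_k\to 0$ uniformly on every compact interval not containing $0$. We need smallness on intervals of length tending to infinity, so we exploit the dilation structure. Put $P_k(x)=Q_k(\theta^{-k}x)$. This reparametrization does not preserve the frequency set, so instead we keep $Q_k$ and translate. Pick a compact interval $I=[a,b]$ with $0<a<b$. By Salem, $\epsilon_k:=\sup_I|Q_k|\to0$. Any interval $J$ of fixed length on which $Q_k$ is small, centred far from $0$, would give non-coherence provided the interval $J$ in the Definition were allowed to vary. The Definition, however, fixes one compact interval $J$ and a constant $C$. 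So it suffices to exhibit, for the given $J$, polynomials with $\sup_J|P|$ arbitrarily small while $\|P\|_\infty=1$. Translating in $x$ (shifting $x$ by $t$) multiplies the coefficients by unimodular constants and keeps the frequencies. Hence we may move $J$ so that it lies inside $[a,b]$, using $P(x)=Q_k(x+t)$ with $t$ chosen so that $J+t\subset[a,b]$; this works because $[a,b]$ may be taken of length larger than $|J|$. Then $\sup_J|P|\le\epsilon_k\to0$ while $\|P\|_\infty=1$, so (\ref{e3}) fails for every $C$.

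\emph{Step 3 (main obstacle).} The only substantive input is Salem's uniform convergence statement, i.e.\ deriving from divergence of $\sum_k\sin^2(\pi\theta^k x)$ that $\prod_j|\cos(2\pi\theta^j x)|\to0$ uniformly on $[a,b]$. We plan to cite it as stated in the paper. If a justification is needed, we would use $|\cos y|\le \exp(-\tfrac12\sin^2 y)$, which gives $|Q_k(x)|\le\exp(-\tfrac12\sum_{j\le k}\sin^2(2\pi\theta^jx))$; pointwise divergence at $2x$ yields pointwise convergence to $0$. Upgrading this to uniformity on compacts is the delicate point. Monotonicity in $k$ together with Dini's theorem applied to the continuous decreasing functions $|Q_k|$ with limit $0$ handles it. Finally, the translation trick in Step 2 requires $[a,b]$ to have length exceeding that of $J$, which is free since $a>0$ and $b$ are arbitrary.
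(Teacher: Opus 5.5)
Your proof is correct and follows the paper's argument. You use Salem's products $Q_k$ in the additive form (\ref{qk}), with frequencies $\pm p_i\in R(\theta,1)$ and $\|Q_k\|_\infty=Q_k(0)=1$, and translate so that the fixed interval $J$ lands in a compact interval avoiding $0$, where $Q_k\to 0$ uniformly. Two details are a little more careful than the paper: your fixed shift $t$ with $J+t\subset[a,b]$, $a>0$, is a more precise version of the paper's $P_k(x)=Q_k(x+k)$, since Salem's statement only controls fixed compact intervals, and your Dini argument supplies the uniform convergence that the paper only cites.
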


\noindent {\it Proof. } We argue by contradiction.  If $\theta$ is not a Pisot number the products $Q_k(x)$ in their additive form \eqref{qk} converge to $0$ uniformly on any compact interval not containing $0.$  The frequencies of $Q_k$ belong to $R(\theta, 1),$ and we have $Q_k(0)=1.$ The trigonometric sums $P_k(x)=Q_k(x+k)$ have the same frequencies which do not satisfy \eqref{e3}. So $R(\theta, 1)$ is not a coherent set of frequencies.
$\Box$ \smallskip 

This theorem is now applied to derive an assertion which is weaker than Feng's theorem and stronger than a theorem of Bugeaud \cite{Bug} from 1996 which assumed that $S(\theta, m)$ is uniformly discrete for all $m\in\NN .$

\begin{Theorem}
If $S(\theta, 4m)$ is uniformly discrete for some $m>\theta-1$ then $\theta$ is a Pisot number.  \label{Bu}
\end{Theorem}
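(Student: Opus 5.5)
The plan is to make the proof of Theorem \ref{pi} work directly for the particular trigonometric sums $Q_k$ from \eqref{qk}, rather than to prove full coherence of $R(\theta,1)$. The point is that the coefficients of $Q_k$ are positive and sum to $1$. For such sums, \emph{almost periods shared by all frequencies} give a uniform lower bound on a relatively dense set of points $t$. Such almost periods exist when the frequencies lie in a Meyer set, and the hypothesis on $S(\theta,4m)$ is exactly what makes $R(\theta,m)\supseteq R(\theta,1)$ a Meyer set.

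\emph{Step 1 (difference set).} Let $\delta=h(S(\theta,4m))>0$. Take a nonzero $r=\sum_0^K\epsilon_k\theta^k$ with $\epsilon_k\in\{-2m,\dots,2m\}$. Then $r=s_1-s_2$ with
\[ s_1=\sum_0^K(\epsilon_k+2m)\theta^k\in S(\theta,4m), \qquad s_2=\sum_0^K 2m\,\theta^k\in S(\theta,4m), \]
so $|r|\ge\delta$. Hence $\Lambda:=R(\theta,m)$ satisfies $\Lambda-\Lambda=R(\theta,2m)$ with $h(R(\theta,2m))\ge\delta$.

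\emph{Step 2 (Meyer set).} Since $m>\theta-1$, $\Lambda$ is relatively dense, as remarked in Section \ref{s1}. If needed I would check this directly by a greedy digit expansion: consecutive elements of $R(\theta,m)$ of bounded size are at bounded distance, because the digit range $2m+1>\theta$ covers each gap at the next scale. Together with Step 1, $\Lambda$ is a Delone set with $\Lambda-\Lambda$ uniformly discrete, hence a Meyer set by Lagarias' criterion quoted in Section \ref{s1}.

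\emph{Step 3 (almost periods).} I would invoke the classical fact from \cite{Meyer72} that Meyer sets are harmonious. That is, for every $\varepsilon>0$ the set
\[ T_\varepsilon=\{t\in\RR : |e^{2\pi i\lambda t}-1|<\varepsilon \text{ for all } \lambda\in\Lambda\} \]
is relatively dense, say with gaps at most $L_\varepsilon$.

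\emph{Step 4 (contradiction).} Suppose $\theta$ is not Pisot. By Salem's observation, $Q_k\to0$ uniformly on $I=[1,1+L_\varepsilon]$. Write $Q_k(x)=2^{-k}\sum_i\cos(2\pi x\,p_i(\theta))$ with $p_i(\theta)\in R(\theta,1)\subseteq\Lambda$. For $t\in T_\varepsilon$ we get
\[ |Q_k(t)-Q_k(0)|\le 2^{-k}\sum_i|e^{2\pi i p_i(\theta)t}-1|<\varepsilon, \]
so $Q_k(t)>1-\varepsilon$. Take $\varepsilon=1/2$ and pick $t\in T_{1/2}\cap I$, which exists since the gaps of $T_{1/2}$ are at most $L_{1/2}$. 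Then $Q_k(t)>1/2$ for every $k$, which contradicts uniform convergence to $0$ on $I$. This is precisely the failure of \eqref{e3} used in the proof of Theorem \ref{pi}, so the argument runs in parallel to that theorem.

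The main obstacle is Step 3, together with the relative density in Step 2. Harmonicity of Meyer sets is a genuine theorem, proved in \cite{Meyer72} by constructing a model set containing $\Lambda$, or equivalently via the finite set $E$ in $\Lambda-\Lambda\subset\Lambda+E$. I would cite it rather than reprove it. If only coherence in the sense of the Definition is wanted, the ingredient needed is the same: harmonicity yields \eqref{e3} for sums with nonnegative coefficients, which is all that Theorem \ref{pi} uses. I expect the factor $4$ in the hypothesis to arise exactly from Step 1: it makes $R(\theta,2m)$ embed into differences of $S(\theta,4m)$, with $m$ chosen so that $R(\theta,m)$ is relatively dense.
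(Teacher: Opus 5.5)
Your proof is correct. Steps 1--2 follow the paper: you show that $R(\theta,m)$ is a Meyer set via difference sets and Lagarias' criterion. Steps 3--4 take a genuinely different route. The paper quotes the theorem that a Meyer set is a coherent set of frequencies, passes to the subset $R(\theta,1)$, and applies Theorem \ref{pi}, which uses the translated sums $Q_k(x+k)$ with arbitrary complex coefficients. You instead use the harmonious characterization of Meyer sets (relatively dense $\varepsilon$-duals). You also exploit that $Q_k$ is a convex combination of characters, so $Q_k>1-\varepsilon$ on $T_\varepsilon$, and this contradicts Salem's decay.

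What your route buys: the harmonic-analysis input is the most standard property of Meyer sets, and you never need coherence for general coefficients. In fact a single nonzero $t\in T_{1/2}$ together with pointwise decay $Q_k(t)\to 0$, which follows from Pisot's divergence theorem, already suffices. What the paper's route buys: it is modular, and it yields the stronger structural fact that $R(\theta,m)$, and hence $R(\theta,1)$, is a coherent set of frequencies, which fits the framework of Theorem \ref{pi}.

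Your closing remark is slightly off. Theorem \ref{pi} uses translates $Q_k(x+k)$, whose coefficients are not nonnegative. Harmonicity gives \eqref{e3} for translates of nonnegative-coefficient sums, with $J$ of length at least $L_\varepsilon$. That is exactly what your Step 4 does.

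There is one slip in Step 1. Your computation shows that every nonzero element of $R(\theta,2m)=\Lambda-\Lambda$ has modulus at least $\delta$. That gives $h(R(\theta,m))\ge\delta$, i.e.\ $\Lambda$ is uniformly discrete, not $h(R(\theta,2m))\ge\delta$. For Lagarias' criterion you need $0$ to be isolated in $R(\theta,2m)-R(\theta,2m)=R(\theta,4m)$. This is immediate from the hypothesis, since $R(\theta,4m)=S(\theta,4m)-S(\theta,4m)$: split each digit into its positive and negative parts.

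This is also where the factor $4$ really comes from, as in the paper: two levels of differences. Your Step 1 as written would only need $S(\theta,2m)$ to be uniformly discrete, because already $R(\theta,2m)=S(\theta,2m)-S(\theta,2m)$.
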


\noindent {\it Proof. } We first show that $R(\theta, m)$ is a Meyer set. \quad
A set $M$ is uniformly discrete if and only if $0$ is an isolated element in $M-M.$ If $S(\theta, 4m)$ is uniformly discrete, then $0$ is isolated in $R(\theta, 4m).$  Now $R(\theta, 4m)=R(\theta, 2m)-R(\theta, 2m).$ Therefore $R(\theta, 2m)$ and its subset $R(\theta, m)$ are uniformly discrete. Since $R(\theta, m)$ is relatively dense when $m>\theta-1,$ it is a Delone set. Its difference set $R(\theta, 2m)$ is uniformly discrete, so $R(\theta, m)$ is a Meyer set.

A Meyer set is a coherent set of frequencies \cite{Meyer72,Mo97}.
Subsets of a coherent set of frequencies are coherent sets, and $R(\theta, 1)\subset R(\theta, m).$ Theorem \ref{pi} now implies that $\theta$ is Pisot.  \hfill $\Box$ \medskip

\section{Self-similar patterns with Pisot factor are uniformly discrete} \label{s3}
Now we consider the self-similarity equation $\Lambda= \theta\Lambda +F$ for a Pisot number $\theta .$ So far $F$ was a set of integers. Rational numbers could be multiplied by their common denominator.  A more general condition is that $F$ consists of algebraic integers in the field $K[\theta]$ generated by $\theta.$ But that would still imply $F\subset\RR .$  In order to include two-dimensional examples, like pentagonal quasicrystallic patterns, we allow for algebraic integers in a larger field $K\supset K[\theta] .$ We assume that $\theta$ is a unit (the constant coefficient of its polynomial is $\pm 1$) and fulfils the following condition, cf. \cite[Section 2]{BaMe}.
\begin{equation}
\mbox{The Galois conjugates $\theta'$ of $\theta$ in $K$ either coincide with $\theta$ or have modulus less than one. }
\label{realcond}\end{equation}
A typical example is the golden ratio $\theta\approx 1.618$  and the fifth cyclotomic field $K.$ 

\begin{Theorem}\label{t3}
Let $\Lambda$ be a closed discrete subset of the ring of integers $R$ in an algebraic field $K$ such that $\Lambda= \theta\Lambda +F$  where $\theta\in K$ is a Pisot unit which fulfils \eqref{realcond}. Then $\Lambda$ and $\Lambda -\Lambda$ are uniformly discrete.  In particular, if $\Lambda$ is a Delone set then it is a Meyer set.
\end{Theorem}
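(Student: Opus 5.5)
The plan is the standard Minkowski embedding argument. Let $\sigma_1=\mathrm{id},\sigma_2,\dots,\sigma_d$ be the embeddings of $K$ into $\mathbb{C}$, grouped by whether $\sigma_j(\theta)=\theta$ or $|\sigma_j(\theta)|<1$; by \eqref{realcond} these are the only two possibilities. Let $\Sigma$ be the set of embeddings $\sigma$ with $|\sigma(\theta)|<1$. The key claim is that there is a constant $C$ such that $|\sigma(\lambda)|\le C$ for all $\lambda\in\Lambda$ and all $\sigma\in\Sigma$. Once this holds, every difference $z=\lambda_1-\lambda_2\in\Lambda-\Lambda$ satisfies $|\sigma(z)|\le 2C$ for $\sigma\in\Sigma$. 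If $z\neq 0$ then $z$ is a nonzero algebraic integer, so its norm is a nonzero integer and $|N_{K/\mathbb{Q}}(z)|=\prod_{\sigma}|\sigma(z)|\ge 1$.

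The remaining embeddings, those fixing $\theta$, must also be controlled. I would first try to use that $\Lambda$ lies in $\RR^n$ via a fixed embedding (the identity), and treat the other embeddings $\tau$ with $\tau(\theta)=\theta$. Apply $\tau$ to the self-similarity equation: $\tau(\Lambda)=\theta\,\tau(\Lambda)+\tau(F)$. The issue is whether $|\tau(z)|$ is comparable to $|z|$. The cleanest route is to take the map $\Phi:K\to \RR^n\times \prod_{\tau\ne\mathrm{id},\,\tau(\theta)=\theta}\mathbb{C}\times\prod_{\sigma\in\Sigma}\mathbb{C}$. Iterating the equation, every $\lambda$ satisfies $\lambda=\theta^k\mu+\sum_{j<k}\theta^j f_j$. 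Note that this expansion is only valid because $\Lambda$ equals $\theta\Lambda+F$, so every point has a preimage. In the physical-like coordinates (identity and the $\tau$'s) the action is the same expansion by $\theta$, while in the $\Sigma$ coordinates it is a contraction. So for the $\tau$-coordinates I would argue via discreteness: points of $\Lambda$ within a ball $B_r$ have bounded digit length up to a bounded remainder. Then $\tau(\lambda)$ differs from $\lambda$ ``in the same way'', giving $|\tau(z)|\le A|z|+B$ on $\Lambda-\Lambda$. Hence for $z\in\Lambda-\Lambda$ with $|z|\le 1$ all conjugates are bounded by a constant $D$. Then $1\le |N(z)|\le |z|^{a}D'$, where $a\ge1$ is the multiplicity of the identity-type factor. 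This gives $|z|\ge D'^{-1/a}$, i.e.\ uniform discreteness of $\Lambda-\Lambda$ and hence of $\Lambda$. The Meyer statement then follows from the Lagarias criterion quoted in Section \ref{s1}.

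For the key claim, write $\lambda=\theta\lambda_1+f_1$ with $\lambda_1\in\Lambda$, and iterate as long as the points stay outside a fixed ball. Since $\theta>1$, the preimage map $\lambda\mapsto\lambda_1=(\lambda-f_1)/\theta$ is a contraction in the identity coordinate modulo $F$. After finitely many steps, depending on $|\lambda|$, we reach $\lambda_k$ in a fixed bounded region. Closed discreteness makes this region contain only finitely many points $\Lambda_0$. Then for $\sigma\in\Sigma$ we get $\sigma(\lambda)=\sigma(\theta)^k\sigma(\lambda_k)+\sum_{j<k}\sigma(\theta)^j\sigma(f_j)$. This is bounded by $\max_{\Lambda_0}|\sigma|+\max_F|\sigma|/(1-|\sigma(\theta)|)$, uniformly in $k$. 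This is a Pisot-digit-expansion bound and I expect it to be routine.

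The main obstacle is the non-identity embeddings $\tau$ with $\tau(\theta)=\theta$, for example complex conjugation in the pentagonal case. I would first try to avoid them by bounding $\tau(\lambda)$ through the same expansion. The expansion gives $|\tau(\lambda)|\le|\theta|^k|\tau(\lambda_k)|+\text{geometric sum}\le c\,\theta^k$, and likewise $|\lambda|$ is comparable to $\theta^k$ unless cancellations occur. Handling those cancellations for the difference $z$ is the delicate point. If it fails, I would restrict to the case where such $\tau$ act isometrically on $\RR^n$ via the embedding, as in the cyclotomic example where $|\tau(z)|=|z|$.
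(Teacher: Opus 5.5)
\textbf{Verdict: genuine gap.} The step that fails is the one you flag yourself: controlling the embeddings other than the identity that fix $\theta$.

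Your first step is correct. Pulling $\lambda$ back along $\lambda=\theta\lambda_1+f_1$, $\lambda_1=\theta\lambda_2+f_2,\dots$ brings it into a fixed ball. Closed discreteness leaves only finitely many points of $\Lambda$ there. This bounds $|\sigma(\lambda)|$ uniformly on $\Lambda$ for every embedding $\sigma$ with $|\sigma(\theta)|<1$.

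For an embedding $\rho\neq\mathrm{id}$ with $\rho(\theta)=\theta$, the claim that $\rho(\lambda)$ ``differs from $\lambda$ in the same way'', giving $|\rho(z)|\le A|z|+B$ on $\Lambda-\Lambda$, is not an argument. No such estimate follows from the hypotheses as stated. Here is a counterexample.
\begin{itemize}
\item Take $K=\mathbb{Q}(\sqrt2,\sqrt5)$ and $\theta=\frac{1+\sqrt5}{2}$. Its conjugates in $K$ are $\theta$ and $-1/\theta$, so \eqref{realcond} holds.
\item Take $F=\{0,1,\sqrt2\}$ and $\Lambda=\{\sum_{k=0}^{N}\epsilon_k\theta^k : N\in\NN,\ \epsilon_k\in F\}\subset R$. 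Then $\Lambda=\theta\Lambda+F$.
\item $\Lambda$ is closed and discrete. Each point is $a+b\sqrt2$ with $a,b\ge 0$ lying in the uniformly discrete set $S(\theta,1)$.
\item Put digit $1$ only at even positions and $\sqrt2$ only at odd positions $\le 2m+1$. This gives $4^{m+1}$ distinct points, because subset sums of powers of $\theta^2>2$ are distinct and $1,\sqrt2$ are independent over $\mathbb{Q}(\sqrt5)$.
\item These points lie in an interval of length $O(\theta^{2m})$. Since $4>\theta^2$, $\Lambda$ is not uniformly discrete.
\end{itemize}
The embedding $\rho:\sqrt2\mapsto-\sqrt2$ fixes $\theta$. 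By your own norm argument, $|\rho(z)|$ must therefore be unbounded on $\{z\in\Lambda-\Lambda : |z|\le 1\}$.

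So your fallback is not a convenience but a hypothesis the statement actually needs. The only embeddings of $K$ fixing $\theta$ must be the identity and, if $K\not\subset\RR$, complex conjugation. This holds in the paper's examples: the fifth cyclotomic field over $\mathbb{Q}(\sqrt5)$, and $\mathbb{Q}(\omega,\beta)$ over $\mathbb{Q}(\beta)$.

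Under that hypothesis your proof is complete and self-contained, with one small fix. To get $\Lambda-\Lambda$ uniformly discrete, and not just $\Lambda$, apply the norm bound to $(\Lambda-\Lambda)-(\Lambda-\Lambda)$. Its contracting conjugates are bounded by $4C$.

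The paper argues quite differently and never mentions embeddings.
\begin{itemize}
\item It imports from \cite{BaMe} a maximal closed discrete solution $\Lambda^*$, generated from a finite set $Z_0$ of initial values (finite because $\theta$ is Pisot).
\item The estimate $|\theta w+f|\ge|w|$ outside $B$ then gives $\Lambda^*\cap B=Z_0$.
\item The same reasoning is repeated for $\Gamma=\Lambda-\Lambda$ with digits $F-F$, and then for $\Gamma-\Gamma$.
\end{itemize}
The field restriction is hidden in these imported facts. In the example above, $\Lambda-\Lambda$ is not even closed and discrete. So the paper's identification of $\Lambda^*-\Lambda^*$ with a closed discrete solution of the difference equation fails there as well. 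Your route has the advantage of showing exactly where the extra hypothesis enters.
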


\noindent {\it Proof. } We use the results of \cite{BaMe}: there is a maximal closed and discrete solution $\Lambda^*$ of the equation \eqref{selF} within the ring $R$ of algebraic integers of $K.$ If we prove uniform discreteness for $\Lambda^*,$ it holds for all solutions $\Lambda .$  According to \cite{BaMe}, $\Lambda^*$ is obtained by applying all the mappings  $g_f(z)=\theta z +f$ with $f\in F$ recursively, starting with a certain set $Z_0\subset R$ of initial values. Since $\theta$ is Pisot, the set $Z_0$ is finite \cite[Proposition 8]{BaMe}. As a subset of the complex plane, $Z_0$ is contained in the ball  
\begin{equation} B=\{ z\, | \ (\theta -1)|z|<M\} \quad\mbox{ where } \
M=\max_{f\in F} |f| \ .\label{bal}\end{equation}
Moreover, any image $g_f(z)$ of some $z\in Z_0$ which belongs to $B$ is also in $Z_0.$ For $w\not\in B$ we have the estimate
\[ |g_f(w)|=|\theta w+f|\ge \theta |w|-|f| \ge \theta |w|-M \ge |w| \]
which says that application of the mappings  $g_f$ to points outside $B$ will never lead back to $B.$ Thus $\Lambda^*\cap B= Z_0,$ and 0 is an isolated point of $\Lambda^* .$

The set $\Gamma=\Lambda-\Lambda$ fulfils the equation $\Gamma=\theta\Gamma+G$ with $G=F-F.$ All the previous assumptions of the theorem are valid, so the same proof applies to this equation and shows that 0 is an isolated point of $\Gamma^*=\Lambda^*-\Lambda^*.$ Thus $\Lambda^*$ is uniformly discrete. Now we can go one step further and apply the proof to the equation for difference sets $\Gamma-\Gamma,$ showing that 0 is isolated in $\Gamma^*-\Gamma^*.$  This shows that $\Gamma^*$ is uniformly discrete. As a consequence, for any solution $\Lambda$ of \eqref{selF}, both $\Lambda$ and $\Gamma=\Lambda-\Lambda$ are uniformly discrete. If in addition $\Lambda$ is relatively dense, it must be a Meyer set by the criterion of Lagarias.
\hfill $\Box$ \medskip 

For the case $\theta <2,$ we obtain with Theorem \ref{t1} the following corollary.

\begin{Theorem}
Let $\Lambda\subset \mathbb{R}$ be a Delone set and $q\in (1, 2)$ a real number with
$\Lambda\supseteq q\Lambda+\{0, 1\}. $ \
Then $q$ is a Pisot unit and $\Lambda$ is a Meyer set.
\end{Theorem}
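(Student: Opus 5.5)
The plan is to get the Pisot property from Theorem \ref{t1a}, then the Meyer property from a Garsia-type separation bound, and finally the unit property, which is the part I expect to be hardest.

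\emph{Pisot.} Apply Theorem \ref{t1a} with $n=1$, $f=1$ and $m=1$. The hypothesis $\Lambda\supseteq q\Lambda+\{0,1\}$ is exactly the inclusion required there. The condition $m=1>q-1$ is equivalent to $q<2$. A Delone set is uniformly discrete, so Theorem \ref{t1a} gives that $q$ is a Pisot number. (This is Theorem \ref{t1}.)

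\emph{Meyer.} Theorem \ref{t3} cannot be quoted directly: it needs the equality $\Lambda=q\Lambda+F$ and $\Lambda$ inside a ring of integers, and we only have an inclusion. I would argue directly instead.

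\begin{itemize}
\item Iterating the inclusion as in the proof of Theorem \ref{t1a} gives $\Lambda\supseteq q^k\Lambda+S_k$ for every $k$, where $S_k=\{\sum_{j<k}\epsilon_jq^j \mid \epsilon_j\in\{0,1\}\}$. Hence every element of $R(q,1)$ occurs as a difference of two points of $\Lambda$.
\item For $\lambda,\lambda'\in\Lambda$, relative density of $\Lambda$ gives $\mu,\mu'\in\Lambda$ with $|\lambda-q^k\mu|\le R$ and $|\lambda'-q^k\mu'|\le R$. So $\lambda-\lambda'$ equals $q^k(\mu-\mu')$ plus an element of the bounded set $(\Lambda-\Lambda)\cap[-2R,2R]$.
\item The aim is a finite set $E$ with $\Lambda-\Lambda\subseteq\Lambda+E$. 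For this I would use that $q$ is Pisot: by Garsia's lemma, $R(q,m)$ is uniformly discrete for every $m$. I would then control the bounded remainder terms by showing that $(\Lambda-\Lambda)\cap[-2R,2R]$ is finite, using the self-similar structure $\Lambda\supseteq q^k\Lambda+S_k$.
\end{itemize}

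This mirrors the Proposition of Section \ref{s1} run in reverse. The finiteness of $(\Lambda-\Lambda)\cap[-2R,2R]$ is the genuinely delicate point of this step, and I have not checked that the argument closes.

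\emph{Unit.} I would argue by contradiction with a counting argument.

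\begin{itemize}
\item Suppose $q$ is not a unit. Then some prime ideal $\mathfrak p$ of the ring of integers of $\mathbb{Q}(q)$ divides $q$.
\item I would show that the elements of $S_k$ stay distinct as $k$ grows, with a $\mathfrak p$-adic separation that forces $|S_k|$ to grow like $2^k$. Since the elements of $S_k$ are algebraic integers whose Galois conjugates are uniformly bounded, this exponential count is what conflicts with the Pisot bound below.
\item Because $q^k\Lambda$ has gaps of size $\ge q^kh(\Lambda)$, an interval of length $q^kh(\Lambda)$ placed around a point $q^k\mu$ contains the translate $q^k\mu+S_k$. It also contains only $O(q^k)$ points of $\Lambda$, since $\Lambda$ is Delone.
\item Then $2^k\le Cq^k$ for all $k$, which is impossible because $q<2$. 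Hence $q$ is a unit.
\end{itemize}

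The main obstacle is the middle bullet: proving that non-units force $|S_k|\gg 2^k$. If this approach fails, I would try instead to embed $\Lambda$ into $x_0+\mathbb{Z}[q]$ (or a finite union of such translates). The Meyer property from the previous step should make this possible, and then one could invoke Theorem \ref{t3}, or the finiteness of the initial set $Z_0$ from \cite{BaMe}, to conclude.
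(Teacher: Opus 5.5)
Your Pisot step is the paper's own argument (Theorem~\ref{t1a} with $m=f=1$, i.e.\ Theorem~\ref{t1}). The unit property, which you call the main obstacle, is immediate once $q$ is Pisot. The norm $N(q)$ is a nonzero rational integer with $|N(q)|=q\prod|q'|<q<2$, so $N(q)=\pm1$. Your counting idea would also work, with no prime-ideal machinery. A coincidence of two digit strings in $S_k$ gives $\sum_j\delta_jq^j=0$ with $\delta_j\in\{-1,0,1\}$, whose lowest and highest nonzero coefficients are $\pm1$, and that already forces $q$ to be a unit. Note, however, that $q^k\mu+S_k$ lies in an interval of length $(q^k-1)/(q-1)$, not $q^kh(\Lambda)$.

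The genuine gap is the Meyer step, and under the stated hypothesis $\Lambda\supseteq q\Lambda+\{0,1\}$ it cannot be closed. The finiteness of $(\Lambda-\Lambda)\cap[-2R,2R]$ that you leave open is itself a consequence of the Meyer property. Nothing in the hypothesis controls differences between points of $\Lambda$ lying on unrelated orbits of $g_0,g_1$; Garsia's lemma only handles differences inside one block $q^k\mu+S_k$. Here is a sketch of a counterexample for $q=\tau$.
\begin{enumerate}
\item[(a)] Let $A=\Lambda^*$ be the Meyer solution of Example~\ref{ex1}, and let $B=\bigcup_{k\ge0}(\tau^kx+S_k)$ be the forward orbit of a point $x\notin\mathbb{Q}(\tau)$ with $|x|$ large, so that the blocks of $B$ lie in disjoint intervals with growing gaps. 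Then $\Lambda=A\cup B$ satisfies $\Lambda\supseteq\tau\Lambda+\{0,1\}$.
\item[(b)] For a fixed $C$ we have $A-S_k\subseteq M=\{d\in\ZZ[\tau]:|d'|\le C\}$, where $d'$ is the Galois conjugate.
\item[(c)] One can choose such $x$ with $\mathrm{dist}(\tau^kx,M)\ge\delta>0$ for all $k$. Equivalently, the forward orbit of $(x,0)$ under the hyperbolic automorphism of $\RR^2/\{(d,d'):d\in\ZZ[\tau]\}$ induced by $\tau$ must avoid a thin strip around a stable segment through $0$. Uncountably many such $x$ exist, e.g.\ from the stable set of a horseshoe avoiding $0$. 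With this choice $\Lambda$ is Delone.
\item[(d)] Yet the numbers $\tau^kx-a_k$, with $a_k\in A$ and $|\tau^kx-a_k|\le R$, are infinitely many distinct points of $(\Lambda-\Lambda)\cap[-R,R]$. So $\Lambda$ is not Meyer.
\end{enumerate}

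The paper gets the Meyer property by quoting Theorem~\ref{t3}. Its hypotheses, the equality $\Lambda=q\Lambda+F$ and $\Lambda$ lying in a ring of integers, are exactly the ones you noticed are missing. So the step only goes through once the hypothesis is strengthened to equality, and then it does close. Every $\lambda\in\Lambda$ has a preimage $(\lambda-\epsilon)/q\in\Lambda$. These backward orbits approach $[-1/(q-1),0]$ and, by uniform discreteness, become periodic. Periodic points have the form $-s/(q^n-1)\in\mathbb{Q}(q)$ with $s\in S_n$, and only finitely many of them lie in $\Lambda$. Hence $N\Lambda$ lies in the ring of integers of $\mathbb{Q}(q)$ for some integer $N\ge1$, and Theorem~\ref{t3} applies to $N\Lambda=qN\Lambda+\{0,N\}$. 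Your fallback of embedding $\Lambda$ in finitely many cosets of $\ZZ[q]$ presupposes the Meyer property, and it fails for the example above.
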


\section{One-dimensional examples}
The simplest self-similarity equation is
\begin{equation} \Lambda = \theta\Lambda +\{ 0,1\} =\theta\Lambda \cup (\theta\Lambda +1)\ .
\label{sel2}\end{equation}
For $\theta =2,$ there are lots of solutions $\Lambda$: the non-negative integers, the negative integers, all integers, all rational numbers, all rational numbers, all rational numbers of the form $n/p$ for a fixed odd number $p,$ and so on. Since we are interested in closed discrete sets $\Lambda ,$ we have to restrict the domain of our solutions.  If we require that $\Lambda\subseteq\ZZ$ then we obtain only the first three solutions. Moreover, if we require $\Lambda$ to be a Delone set, the solution $\Lambda=\ZZ$ is unique. \smallskip

Now let $\theta <2$ be an arbitrary Pisot number.  We restrict our attention to solutions in the ring $R$ of integers of the field $K$ generated by $\theta .$ Since we are on the line, this is a real number field and it makes no sense to consider larger fields.
Clearly, $S(\theta, 1)$ defined in \eqref{Sm} is a solution. However, this is not a Delone set since it contains only positive numbers. If $1/(\theta -1)$ belongs to $R,$ there is a another solution within the negative numbers.  All other solutions are Delone sets. 

\begin{Proposition}\label{p1}
For any given Pisot number $\theta<2,$ all closed discrete solutions $\Lambda\subseteq R$ of \eqref{sel2}, with at least one and at most two exceptions, are Delone sets. 
\end{Proposition}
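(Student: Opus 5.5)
The plan is to show that any closed discrete solution $\Lambda\subseteq R$ of \eqref{sel2} which is not relatively dense must lie entirely in a half-line, and that only the two sets named before the proposition can do so.

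\emph{Step 1: dichotomy for the gaps.} Put $\Lambda=\theta\Lambda\cup(\theta\Lambda+1)$ and let $a<b$ be consecutive points of $\Lambda$ with gap $d=b-a$. Write $a=\theta x+e$ and $b=\theta y+e'$ with $x,y\in\Lambda$ and $e,e'\in\{0,1\}$.
If $x<y$, then $y\ge$ the successor $x^+$ of $x$ in $\Lambda$. Both $\theta x+1$ and $\theta x^+$ lie in $\Lambda$, so we get the bound $d\le \max(1,\theta(x^+-x)-1)$, or a similar one. The key point is that the gap $d$ is controlled by the gap $x^+-x$, which is roughly $d/\theta$, plus a constant. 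Iterating this backwards gives $d\le C$, with $C$ of order $1/(\theta-1)$ bounded in terms of $\theta$, provided $\Lambda$ is unbounded in both directions.
\emph{Main obstacle:} this backward iteration must be made rigorous. I would do it by taking a maximal gap. Since $\Lambda$ is uniformly discrete by Theorem~\ref{t3} and lies in $R$, I would first show the gaps are bounded, by exhibiting a point of $\Lambda$ in every long interval. Concretely: if $x\in\Lambda$, then the points $\theta^k x+s$ with $s\in S(\theta,1)$ of length $\le k$ all lie in $\Lambda$. Since $\theta<2$, the finite sums $\sum\epsilon_j\theta^j$ have gaps at most $1$ within $[0,\theta^{k}/(\theta-1)]$; this is the classical fact for $\theta<2$, the case $m=1>\theta-1$. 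So each $x\ge0$ in $\Lambda$ spawns a $1$-dense set on the interval $[\theta^kx,\theta^kx+\theta^k/(\theta-1)]$, and symmetrically for negative $x$.

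\emph{Step 2: reduce to half-lines.} Suppose $\Lambda$ contains a point $x>-1/(\theta-1)$, that is, $x(\theta-1)+1>0$. Then the intervals in Step 1 are
\[
I_k=\Bigl[\theta^kx,\ \theta^k\bigl(x+\tfrac{1}{\theta-1}\bigr)\Bigr],
\]
and they overlap for large $k$. This holds because $\theta^{k+1}x\le\theta^k(x+1/(\theta-1))$ whenever $x(\theta-1)\le 1/(\theta-1)$ or $x\le 0$. If $x$ is too large, I would first replace $x$ by a smaller point; I would treat this as a case split on the sign of $x$. So $\Lambda$ is relatively dense on $[c,\infty)$ for some $c$.
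By the same argument with the map $z\mapsto \theta z+1$ reflected, that is, writing $\Lambda'=-\Lambda-1/(\theta-1)$, which satisfies the same equation after the conjugation $z\mapsto -z-1/(\theta-1)$ fixing the form $\theta z+\{0,1\}$: a point $x<-1/(\theta-1)$ makes $\Lambda$ relatively dense on $(-\infty,c']$.

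\emph{Step 3: fill the middle and count exceptions.} If $\Lambda$ has points on both sides of $-1/(\theta-1)$, it is dense at both ends. The bounded middle is filled because the images $\theta^k(\cdot)$ of the ends again cover everything, so $\Lambda$ is Delone.
Otherwise $\Lambda\subseteq[-1/(\theta-1),\infty)$ or $\Lambda\subseteq(-\infty,-1/(\theta-1)]$. The fixed point $-1/(\theta-1)$ lies in $R$ only in the second case. In the first case, every point generates, under backward iteration $z\mapsto(z-e)/\theta$ within a discrete set, eventually the fixed region near $0$. By the finiteness of $Z_0$ from the proof of Theorem~\ref{t3}, and the nonnegativity of the points, this forces $\Lambda=S(\theta,1)$ or $\Lambda=S(\theta,1)\cup\{0\}$-type, i.e.\ essentially one solution. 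The negative case gives at most one more solution, $-S(\theta,1)-1/(\theta-1)$ type, and it exists only when $1/(\theta-1)\in R$. This yields one or two exceptions.
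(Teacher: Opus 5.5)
Your argument is correct in substance, but it follows a different route from the paper. The paper imports the structure result of \cite{BaMe}: every solution is generated from a periodic orbit of $\{g_0,g_1\}$. It then sorts solutions by that orbit: the fixed point $0$, the fixed point $-1/(\theta-1)$, or an orbit strictly between them. In the last case it gets relative density from the covering $\theta^kT\subseteq T+(H+\theta H+\cdots+\theta^{k-1}H)$ with $H=\{-a,b\}$.

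You avoid periodic orbits entirely. Density comes from the forward inclusion $\Lambda\supseteq\theta^kx+\{\sum_{j<k}\epsilon_j\theta^j\}$ together with the $1$-density of these digit sums for $\theta<2$. The left half-line is handled by the conjugacy $z\mapsto -z-1/(\theta-1)$, which exchanges $g_0$ and $g_1$. The two exceptional sets are identified by backward iteration.

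Your route is more self-contained: only Theorem~\ref{t3} is used, for uniform discreteness. Its density step is also on firmer ground. Your digit sums are guaranteed to lie in $\Lambda-\theta^kx$. The paper's final inclusion instead needs $H+\theta H+\cdots+\theta^{k-1}H\subset\Lambda$ for points $-a,b\in\Lambda$, and this does not follow from \eqref{sel2}: for $\Lambda^*$ of Example~\ref{ex1} with $-a=-1$, $b=1$, the point $\tau-1$ is not in $\Lambda^*$. The paper's route, in exchange, gives the catalogue of all solutions via periodic orbits, which the examples rely on.

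Several steps still need to be written out.
\begin{itemize}
\item ``Replace $x$ by a smaller point'' must be backward iteration; a case split on the sign of $x$ does not achieve it. Since $\Lambda\subseteq\theta\Lambda+\{0,1\}$, each $x\in\Lambda$ has a preimage $y=(x-e)/\theta\in\Lambda$. Moreover $y>-1/(\theta-1)$ whenever $x>-1/(\theta-1)$, and $y\le x/\theta$ when $x>0$. So finitely many steps give a point of $\Lambda$ in $(-1/(\theta-1),1/(\theta-1)^2]$.
\item A point strictly between $-1/(\theta-1)$ and $0$ already gives density on the whole line, since then $\theta^kx\to-\infty$ as well.
\item In the half-line case $\Lambda\subseteq[-1/(\theta-1),\infty)$, you must first exclude negative points: a point $x<0$ would put $\theta^kx\to-\infty$ into $\Lambda$. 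Then the backward orbit of any $x>0$ strictly decreases while positive, stays nonnegative, and lives in the finite set $\Lambda\cap[0,x]$. Hence it reaches $0$, so $\Lambda\subseteq S(\theta,1)$, and $0\in\Lambda$ gives $\Lambda=S(\theta,1)$ exactly. No appeal to $Z_0$ is needed, and ``$S(\theta,1)\cup\{0\}$'' is the same set.
\item The digit sums fill $[0,(\theta^k-1)/(\theta-1)]$, not $[0,\theta^k/(\theta-1)]$. With the correct endpoint, consecutive intervals may leave gaps up to $1/(\theta-1)$, which is harmless.
\item The gap recursion at the start of Step~1 is never used and can be dropped.
\end{itemize}
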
 

\noindent {\it Proof. } We use the notation of \cite{BaMe}, writing $\Lambda= g_0(\Lambda)\cup g_1(\Lambda)$ for \eqref{sel2} where $g_0(x)=\theta x$ and $g_1(x)=\theta x+1$ are expansive similarity maps. Each solution $\Lambda$ is obtained by applying $g_0,g_1$ recursively to the points of a periodic orbit of $\{ g_0, g_1\},$ cf. \cite[Section 4]{BaMe}. If this periodic orbit is the fixed point 0 of $g_0,$ application of $g_0,g_1$ leads to larger and larger positive numbers, and we obtain $\Lambda=S(\theta, 1).$ If we start with the fixed point  $-1/(\theta -1),$ we get negative numbers which grow in modulus. This case makes sense only if $1/(\beta-1)$ is in $R.$  All other periodic orbits are in the interval $[-1/(\theta -1),0]$ where $g_1$ moves points to the right and $g_0$ moves them to the left. The set $\Lambda$ obtained from such an orbit
contains a negative number $-a$ and a positive number $b.$ Let $H=\{ -a, b\}$ and $T=[-a,b].$ Then
\[ \theta T \subseteq 2T = T+H \ , \ \mbox{ and, by multiplication and substitution,}\]
\[ \theta^2 T \subseteq \theta T +\theta H \subseteq T+H+\theta H , \ \mbox{ and by induction} \]
\[ \theta^k T \subseteq T+(H+\theta H +...+\theta^{k-1}H)\subset T+\Lambda \quad\mbox{ for all $k.$}\]
Since the sets $\theta^kT$ fill the line for $k\to\infty ,$ this implies $T+\Lambda =\RR^2.$ Thus $\Lambda$ is relatively dense. By Theorem \ref{t3}, $\Lambda$ is uniformly discrete, hence a Delone and a Meyer set.
\hfill $\Box$ \medskip 

There is a maximal solution $\Lambda^*$  of \eqref{sel2} in $R$ which is a cut-and-project set \cite[Theorem 1]{BaMe}. This fact together with the above proof gives the following corollary.

\begin{Proposition}\label{p2}
For each Pisot number $\theta<2,$ there is a solution $\Lambda^*$ of \eqref{sel2} in the corresponding ring of algebraic integers which is a Meyer set and a cut-and-project set. 
\end{Proposition}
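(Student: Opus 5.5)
The plan is to take $\Lambda^*$ to be the maximal closed discrete solution of \eqref{sel2} inside the ring $R$ of integers of $K=\mathbb{Q}(\theta)$. Its existence is guaranteed by \cite[Theorem 1]{BaMe}, and the same result says it is a cut-and-project set. What remains is to show that $\Lambda^*$ is a Meyer set.

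First, Theorem \ref{t3} applies directly. Here $F=\{0,1\}\subset R$ and $K$ is the real field generated by $\theta$, so \eqref{realcond} is just the Pisot property. The unit hypothesis must be checked separately. For $\theta<2$ Pisot, the norm of $\theta$ is the product of all conjugates, and this is $\pm$ the constant coefficient. I expect the unit property to come out of the structure of \eqref{sel2} as in \cite{BaMe}; if it is needed, I would cite the reduction used there (or the corollary stated after Theorem \ref{t3}) rather than reprove it. With these hypotheses, Theorem \ref{t3} gives that $\Lambda^*$ and $\Lambda^*-\Lambda^*$ are uniformly discrete.

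Second, I need relative density, and for this I reuse the argument of Proposition \ref{p1}. By maximality, $\Lambda^*$ contains every closed discrete solution in $R$. In particular it contains a solution generated by a periodic orbit of $\{g_0,g_1\}$ lying strictly inside $(-1/(\theta-1),0)$, or at least $\Lambda^*$ itself contains both a negative point $-a$ and a positive point $b$. Granting such points, set $H=\{-a,b\}$ and $T=[-a,b]$. Then $\theta T\subseteq T+H$, and iterating with the inclusions $\theta\Lambda^*\subseteq\Lambda^*$ and $\theta\Lambda^*+1\subseteq\Lambda^*$ gives
\[ \theta^kT\subseteq T+\Lambda^* \quad\mbox{for all } k. \]
Hence $T+\Lambda^*=\RR$, so $\Lambda^*$ is relatively dense. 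It is then a Delone set, and by Lagarias' criterion it is a Meyer set.

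The main obstacle is guaranteeing that $\Lambda^*$ contains points of both signs. If $\Lambda^*$ contained only nonnegative points, it would equal $S(\theta,1)$. To exclude this, I would show that $\Lambda^*$ also contains the orbit of $-1$. The point $-1\in R$ satisfies $g_1(-1)=1-\theta<0$, and the forward orbit of $-1$ under $g_1^{-1}$ and $g_0^{-1}$ stays in the bounded region $B$ of \eqref{bal}. Since $B\cap R$ is finite in the Pisot case, the backward orbit eventually becomes periodic, which yields a periodic orbit containing a negative point. Its generated solution is contained in $\Lambda^*$, and that supplies the required $-a<0<b$.
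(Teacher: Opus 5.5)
Your route is the paper's own: take the maximal solution $\Lambda^*$ from \cite[Theorem 1]{BaMe} (which gives the cut-and-project property), get uniform discreteness of $\Lambda^*$ and $\Lambda^*-\Lambda^*$ from Theorem~\ref{t3}, and get relative density from the interval argument of Proposition~\ref{p1}.

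The genuine gap is the step you add to produce a negative point of $\Lambda^*$. The set $B\cap R$ is not finite: $R\supseteq\ZZ[\theta]$ is dense in $\RR$, because $\theta\in(1,2)$ is irrational. The finiteness in \cite[Proposition 8]{BaMe} concerns $Z_0$, that is, points whose Galois conjugates are bounded as well. The inverse maps $z\mapsto(z-\epsilon)/\theta$ contract in the real embedding but expand by $1/|\theta'|>1$ in every conjugate embedding. So for most points of $R\cap B$, every backward orbit has unbounded conjugates and never closes into a cycle, and the pigeonhole principle gives nothing. What you must show is that some backward orbit of $-1$ (or of some negative element of $R$) stays bounded in all conjugate embeddings. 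Equivalently, $-1$, read in internal space, must lie in the attractor of $z\mapsto\theta'z$, $z\mapsto\theta'z+1$.

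One concrete way is to find $k$ with $\theta^k-1=\sum_{j<k}\epsilon_j\theta^j$, $\epsilon_j\in\{0,1\}$. Then $-1=\theta^k\cdot(-1)+\sum_{j<k}\epsilon_j\theta^j$ exhibits $-1$ as a periodic point of $\{g_0,g_1\}$ in $R$. This works whenever the greedy $\theta$-expansion of $1$ is finite:
\begin{itemize}
\item $\tau^2-1=\tau$ for the golden ratio;
\item $\beta^5-1=\beta^4$ for the plastic number;
\item $\theta^3-1=\theta^2+\theta$ for the tribonacci number. Here $N(\theta-1)=2$, so $1/(\theta-1)\notin R$ and the negative fixed point is not available.
\end{itemize}
For a general Pisot $\theta<2$ you need an extra input, for example that the window in the cut-and-project description of $\Lambda^*$ has nonempty interior, which gives relative density at once. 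The paper's short derivation leaves this point implicit as well.

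There are two smaller points.

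First, the unit property needs no citation. Since $\theta$ has degree at least $2$, $1\le|N(\theta)|=\theta\prod_{\theta'\neq\theta}|\theta'|<2$, so $|N(\theta)|=1$.

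Second, the iteration ``$\theta T\subseteq T+H$, hence $\theta^kT\subseteq T+\Lambda^*$'' does not follow as written; the same shortcut occurs in the proof of Proposition~\ref{p1}. Iterating produces $T+H+\theta H+\dots+\theta^{k-1}H$. Since $H$ consists of points of $\Lambda^*$ rather than digits, these sums need not lie in $\Lambda^*$. For $\theta=\tau$ and $H=\{-1,1\}$, the point $\tau-1\in H+\tau H$ is not in $\Lambda^*$ (Example~\ref{ex1}).

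Use $T_0=[0,1/(\theta-1)]$ instead.
\begin{itemize}
\item Because $\theta\le 2$, $\theta T_0=T_0+\{0,1\}$.
\item Hence $\theta^kT_0=T_0+S_k$, where $S_k=\{\sum_{j<k}\epsilon_j\theta^j\mid \epsilon_j\in\{0,1\}\}$.
\item Since $\Lambda^*\supseteq\theta^k\Lambda^*+S_k$, you get $\theta^k(\lambda+T_0)\subseteq T_0+\Lambda^*$ for every $\lambda\in\Lambda^*$.
\item For $\lambda=-1\in(-1/(\theta-1),0)$, these intervals increase to $\RR$, so $T_0+\Lambda^*=\RR$.
\end{itemize}
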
 

There seem to be only few solutions of \eqref{sel2} beside the non-Delone minimal solutions $\Lambda_0, \Lambda_1$  and the maximal solution $\Lambda^*.$  

\begin{Example}\label{ex1}
Let $\theta=\tau=\frac{1+\sqrt{5}}{2}.$ Starting from 0, we obtain 
\[ \Lambda_0=S(\tau ,1)=\ZZ_\tau = \{\ \sum_0^K \epsilon_k\tau^k\, | \,  K\in\NN , \epsilon_k\in \{0, 1\} \ \}\ . \]
Usually, this set is extended to the whole line by adding $-\ZZ_\tau $ which is natural from the viewpoint of a numeration system. However, this is neither consistent with self-similarity nor with the cut-and-project method. The fixed point of $g_1(x)=\tau x +1$ is $-\tau ,$ and the corresponding solution of \eqref{sel2} is 
\[\Lambda_1=-\Lambda_0 -\tau .\]
Now $\Lambda_0\cup\Lambda_1$ is a Delone set solution of \eqref{sel2}. However, we should also check the periodic points of $\{ g_0, g_1\}$ within $R=\ZZ[\tau]=\{ m\tau +n\, |\, m,n\in\ZZ\} .$ It turns out that beside the fixed points of $g_0$ and $g_1$ only one cycle of period two exists: $y=-1$ and $z=1-\tau,$ with $g_1(y)=z$ and $g_0(z)=y.$ Starting from this cycle we obtain the maximal solution of \eqref{sel2} in $R:$
\[ \Lambda^*=\{ -1, 1-\tau\}\cup\Lambda_0\cup\Lambda_1\ .\]
Thus we have exactly two Delone sets which solve \eqref{sel2}, and they differ only by two points. 
\end{Example}

So far we considered the equation $\Lambda=\theta\Lambda +F$ only for $F=\{ 0,1\} .$ Proposition \ref{p1} together with Proposition \ref{p3} below shows that for statement on uniform discreteness, Delone and Meyer sets this is really the most general case. We noted already in Section \ref{s1} that the first point of $F$ can always be taken as 0.   However, the number and structure of solutions will change when we consider $F=\{ 0,2\}$ (or $\{0, 1+\tau^{-3}\}$, for instance). Since choosing $F=\{ 0,k\}$ is equivalent to extending the domain of solutions for $F=\{ 0,1\}$ to the set $\frac{1}{k}\cdot R,$ the number of solutions will grow.

\begin{Example}\label{ex2}
For $\theta=\tau,$ we consider the solutions of  $\Lambda = \theta\Lambda +\{ 0,2\} .$ Since $\tau^2=\tau+1,$ the mapping $g_0(x)=\tau x$ acts on $R=\ZZ[\tau]$ as the linear mapping $g(m\tau +n)=(n+m)\tau +m.$ If $m$ and $n$ are even, both components of $g(m\tau +n)$ are even. If one of $m,n$ is odd, this also holds for the image under $g.$ So we have two disjoint invariant spaces of $g_0$ which are also invariant under $g_2(x)=\tau x+2$ which now replaces $g_1.$ As a consequence, we have all solutions of Example \ref{ex1}, multiplied by 2, as subsets of the even subspace. On the odd subspace, we look for periodic points of $\{ g_0,g_2\}$ in the interval $[-2\tau ,0]$ where $-2\tau$ is the fixed point of $g_2.$
There are two cycles of period 3: $-\tau, -\tau +1, -1$ and $-\tau, -\tau -1, -2\tau +1$ which have the common point $-\tau .$
This irreducible periodic network generates only one solution. Including unions of solutions for odd and even numbers we obtain seven Delone solutions.
\end{Example}

\section{Two-dimensional examples}
For the golden ratio $\theta=\tau, $ a number of two-dimensional Meyer sets $\Lambda^*$ with decagonal symmetry were constructed and discussed in \cite{BaMe}.  We conclude our paper by briefly introducing new examples with other Pisot factors and with three pieces.
The equation is
\begin{equation} \Lambda = \theta\Lambda + F \quad \mbox{ with } F=\{ a,b,c\} \ .
\label{sel3}\end{equation}
Proposition \ref{p4} below says that for the smallest four Pisot numbers, relatively dense solutions can exist. In Proposition \ref{p4a}, we prove for the plastic number $\beta\approx 1.3247$ and the fourth Pisot number $\theta\approx 1.4656$ the existence of Meyer sets $\Lambda$ which satisfy \eqref{sel3}.

Since the case of collinear digits was studied in \cite{Feng16} and Section \ref{s1}, we now assume that $a,b,c$ are not on a line.  While above we have chosen the origin so that $0$ belongs to $F,$ it now seems reasonable to chose the coordinates to that $a+b+c=0.$
The following assertion is easy to prove and shows that all choices of $a,b,c$ are equivalent, as long as general properties of the solutions are studied. Note that any two triples $a,b,c$ of non-collinear points in the plane with $a+b+c=0$ can be transformed into each other by a linear map $h(x)=Ax.$

\begin{Proposition}\label{p3}
Let $\Lambda$ be a closed discrete subset in $\RR^n$ and $A$ a non-singular $n\times n$ matrix.
\begin{enumerate}
\item[(i)] $\Lambda$ is uniformly discrete, relatively dense, a Delone set, or a Meyer set if and only if $A(\Lambda)$ has the respective property.
\item[(ii)] For a given $q>1$ and a finite set $F\subset\RR^n,$ the set $\Lambda$ satisfies $\Lambda = q\Lambda +F$ if and only if $A(\Lambda )=qA(\Lambda) +A(F).$  \vspace{-2ex}
\end{enumerate} \hfill $\Box$ \vspace{-1ex}
\end{Proposition}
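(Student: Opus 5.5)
The strategy is to use that $h(x)=Ax$ is a linear bijection of $\RR^n$ with bounded inverse. With $\|A\|$ and $\|A^{-1}\|$ the operator norms, we have
\[ \|A\|^{-1}\,|x-y| \;\le\; \|A^{-1}\|^{-1}|x-y| \;\le\; |Ax-Ay| \;\le\; \|A\|\,|x-y| \qquad\text{for all } x,y\in\RR^n ,\]
the left inequality following from $|x-y|=|A^{-1}(Ax-Ay)|\le\|A^{-1}\|\,|Ax-Ay|$ (for the first step one simply uses $\|A^{-1}\|^{-1}$). Since $A^{-1}$ is also non-singular, each ``if and only if'' in (i) only needs to be shown in one direction: from $\Lambda$ to $A(\Lambda)$. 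The converse follows by applying the same argument to $A^{-1}$ and $A(\Lambda)$.

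For (i) I would go through the properties one by one.
\begin{enumerate}
\item[$\bullet$] \emph{Uniform discreteness.} The lower bound gives $h(A\Lambda)\ge \|A^{-1}\|^{-1}h(\Lambda)>0$.
\item[$\bullet$] \emph{Relative density.} Take $y\in\RR^n$ and put $x=A^{-1}y$. Choose $\lambda\in\Lambda$ with $|x-\lambda|\le R$. Then $|y-A\lambda|\le\|A\|R$, so $A\Lambda$ is relatively dense with radius $\|A\|R$.
\item[$\bullet$] \emph{Delone.} This is the conjunction of the two previous properties.
\item[$\bullet$] \emph{Meyer.} Linearity gives $A\Lambda-A\Lambda=A(\Lambda-\Lambda)\subset A(\Lambda+E)=A\Lambda+AE$, and $AE$ is finite. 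Alternatively one can use Lagarias' criterion: $\Lambda-\Lambda$ is Delone if and only if $A(\Lambda-\Lambda)=A\Lambda-A\Lambda$ is Delone, by the Delone case just shown.
\end{enumerate}
Closedness and discreteness of $A\Lambda$ are preserved as well, because $A$ is a homeomorphism.

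For (ii), the key fact is that $A$ commutes with scalar multiplication and is additive on sets: $A(q\Lambda+F)=qA(\Lambda)+A(F)$. Applying the bijection $A$ to both sides of $\Lambda=q\Lambda+F$ therefore gives $A\Lambda=qA\Lambda+AF$. For the converse, apply $A^{-1}$ in the same way. The finiteness of $A(F)$ is immediate.

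I do not expect a genuine obstacle. The only point that needs care is the lower Lipschitz bound, which is where non-singularity enters. Everything else is bookkeeping with linear maps.
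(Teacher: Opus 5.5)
Your proof is correct and is exactly the routine verification the paper has in mind: the paper calls the proposition easy and omits the proof. One slip to fix is the leftmost inequality $\|A\|^{-1}|x-y|\le\|A^{-1}\|^{-1}|x-y|$ in your display, which is false in general (for $A=\frac12 I$ it reads $2|x-y|\le\frac12|x-y|$); since you only ever use the correct bound $|Ax-Ay|\ge\|A^{-1}\|^{-1}|x-y|$, nothing breaks, and you should simply delete that term.
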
 

Thus without loss of generality we can assume that the three-point set $F$ consists of the third roots of unity: $F=\{ 1,\omega, \omega^2\}$ with $\omega=-\frac12 +\frac12\sqrt{3}i.$ These points generate the hexagonal lattice which consists of the integers of the third cyclotomic field. This is a natural symmetric choice. A corresponding ring $R$ of algebraic integers should contain $\theta$ and all related algebraic integers which would require calculations in the field $K=K[\omega,\theta]$ generated by $\omega$ and $\theta .$ Here we focus on more elementary arguments.

\begin{Proposition}\label{p4}
 For $\theta<3/2,$ every solution $\Lambda$ of \eqref{sel3} which contains 0 is relatively dense.   
\end{Proposition}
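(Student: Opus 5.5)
Mimicking the proof of Proposition~\ref{p1}, we show that $\Lambda$ contains the recursively generated set
\[ S_k=\{\, \sum_{j=0}^{k-1}\epsilon_j\theta^j \, | \, \epsilon_j\in F\,\} , \]
and that these sets fill the plane with bounded gaps. By Proposition~\ref{p3} we may take $F=\{1,\omega,\omega^2\}$. Since $0\in\Lambda$, iterating $\Lambda=\theta\Lambda+F$ gives $\Lambda\supseteq \theta^k\Lambda+S_k\ni S_k$ for every $k$, i.e.\ $\Lambda\supseteq S(F)=\bigcup_k S_k$. It therefore suffices to find a compact set $T$ with $\theta T\subseteq T+F$. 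Indeed, by the substitution used in Proposition~\ref{p1},
\[ \theta^k T\subseteq T+F+\theta F+\dots+\theta^{k-1}F = T+S_k\subseteq T+\Lambda . \]
If in addition $0$ is an interior point of $T$, the sets $\theta^kT$ exhaust the plane, so $T+\Lambda=\RR^2$ and $\Lambda$ is relatively dense.

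The natural candidate is the triangle $T$ with vertices $-2, -2\omega, -2\omega^2$, which is centered at $0$ and contains $0$ in its interior. The key step is to verify $\theta T\subseteq T+F=\bigcup_{f\in F}(T+f)$ for $\theta<3/2$.

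The three translates $T+1$, $T+\omega$ and $T+\omega^2$ are copies of $T$ shifted toward the midpoints of its sides. Write $T$ in barycentric-type coordinates as $T=\{x: \langle x,u_i\rangle\le 1\}$, where the $u_i$ are inward-adapted normals with $\langle -2\omega^{j},u_i\rangle$ suitably normalized. A point $x\in\theta T$ then lies in $T+\omega^j$ if and only if the three linear inequalities for $x-\omega^j$ hold. For each point we choose the translate $j$ corresponding to the corner region of $\theta T$ containing it, with the central region covered by any of them.

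The covering check reduces to comparing the distance from the center to a side, scaled by $\theta$, with the distance available after a shift of length $1$. The inradius of $T$ is $1$. A shift by $\omega^j$ of length $1$ pushes the opposite vertex outward by an amount which, in the triangle's own support functions, gives exactly the factor $3/2$: the union $T+F$ is a hexagon-like region whose inscribed homothetic triangle has scaling factor $3/2$. So I expect $\theta T\subseteq T+F$ to hold precisely for $\theta\le 3/2$, with strictness not needed.

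The main obstacle is this elementary but fiddly geometric inclusion. One has to make sure there are no gaps between the three translates near the center and near the edge midpoints of $\theta T$. I would first verify it at the vertices of $\theta T$ and at the edge midpoints, using convexity of each piece and the edge structure of the triangles, and fall back to a hexagon $T$ if the triangle fails.
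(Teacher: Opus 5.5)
Your overall scheme is the paper's: a compact $T$ with $0$ in its interior and $\theta T\subseteq T+F$, iterated to $\theta^kT\subseteq T+F+\theta F+\dots+\theta^{k-1}F\subseteq T+\Lambda$ using $0\in\Lambda$. That part is fine. The gap is the key inclusion. It is false for the triangle you chose, and the failure is not a matter of fiddly details.

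Your $T=\mathrm{conv}\{-2,-2\omega,-2\omega^2\}=\{z:\langle z,\omega^j\rangle\le 1,\ j=0,1,2\}$ has its vertices in the directions $-f$, $f\in F$. Each translate $T+f$ therefore moves toward a side midpoint, which pulls the opposite vertex $-2f$ \emph{inward} to $-f$; it does not push it outward as you claim. Concretely, take $t>0$. The point $-t$ lies in $T+f$ for $f\in\{1,\omega^2\}$ only if $\langle -t-f,\omega\rangle=\tfrac{t+1}{2}\le 1$. It lies in $T+\omega$ only if $\langle -t-\omega,\omega^2\rangle=\tfrac{t+1}{2}\le 1$. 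Hence $(T+F)\cap(-\infty,0]=[-1,0]$. So the vertex $-2\theta$ of $\theta T$ is uncovered for every $\theta>1/2$, and even the vertex $-2$ of $T$ itself is not in $T+F$. Your planned vertex check would have exposed this. The hexagon fallback does not rescue it automatically either: a regular hexagon with vertices $\pm r\omega^j$ covers the negative real axis only down to $-r$, so it fails for every $\theta>1$.

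The cure is to flip the triangle, which is what the paper does. Take $T=\mathrm{conv}(2F)=\mathrm{conv}\{2,2\omega,2\omega^2\}$, so that each shift $f$ points toward the vertex $2f$. Then $T+1$ has vertices $3$, $1+2\omega=\tfrac13\cdot 3+\tfrac23\cdot 3\omega$ and $1+2\omega^2$. So $T+1$ is the corner of $\tfrac32T$ where the barycentric coordinate belonging to the vertex $3$ is at least $\tfrac13$, and likewise for $T+\omega$ and $T+\omega^2$. Since some barycentric coordinate is always $\ge\tfrac13$, you get exactly $T+F=\tfrac32T$. Convexity of $T$ and $0\in T$ then give $\theta T\subseteq\tfrac32T$ for $\theta\le\tfrac32$. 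With this $T$ the rest of your argument goes through unchanged.
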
 

\noindent {\it Proof. } Let $T$ be the convex hull of $2F.$ We then have 
$T+F=(3/2)T.$  Since $\theta<3/2$ it implies 
\begin{equation}\theta T\subset T + F\ .\label{TH} \end{equation} 
 By iteration of \eqref{TH} we obtain for every $m>1$
\begin{equation}  
\theta^m  T\subset  T + \{ F+ \theta\, F+\cdots+ \theta^{m-1}\, F\}  \label{THm} \end{equation}
The union of the dilated triangles $\theta^m T$ is the complex plane. On the other hand, each solution $\Lambda$
of $\Lambda=\theta\Lambda+F$ with $0\in\Lambda$ must contain the bracketed term on the right-hand side of \eqref{THm}, for every $m.$ Thus $T+\Lambda$ must be the whole plane which ends the proof. \hfill $\Box$
\smallskip

There are four Pisot numbers which are smaller than $3/2,$ as shown by Dufresnoy and Pisot \cite{DP55} in the 1950s. The smallest Pisot number $\beta\approx 1.3247,$ called the plastic number, has the minimal polynomial $\beta^3-\beta-1.$ The
fourth Pisot number $\theta\approx 1.4656$ also has a polynomial fo degree 3, $\theta^3-\theta^2-1.$

\begin{Proposition}\label{p4a}
For the first and fourth Pisot number, consider the equation \eqref{sel3} where $a,b$ are algebraic integers in the field $K[\theta]$ and $c=-a-b.$
The recursive application of $g_a(z)=\theta z+a, \ g_b(z)=\theta z+b,$ and $g_c(z)=\theta z+c$ with initial point $0$ produces a solution $\Lambda$  which is a Meyer set. It can be written as
 \[  \Lambda=\bigcup_{m=0}^\infty \Lambda_m\quad\mbox{ with } \quad \Lambda_0=\{0\},\ \Lambda_m= F+\theta F +...+\theta^m F \mbox{ for } m>0 .\]
\end{Proposition}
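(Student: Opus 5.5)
The plan has three parts: show that the set $\Lambda$ built from $0$ solves \eqref{sel3}, show that it is relatively dense, and then use Theorem~\ref{t3} to get uniform discreteness and the Meyer property.

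\emph{Description of the orbit.} By induction on the word length, the images of $0$ under compositions $g_{f_0}\circ g_{f_1}\circ\cdots\circ g_{f_m}$ are exactly the numbers $f_0+\theta f_1+\cdots+\theta^m f_m$. So the orbit of $0$ is $\bigcup_m\Lambda_m$, and $\theta\Lambda_m+F=\Lambda_{m+1}$ for $m\ge 1$, while $\theta\Lambda_0+F=F$. Hence $\theta\Lambda+F=F\cup\bigcup_{m\ge 1}\Lambda_m$. This set equals $\Lambda$ precisely when $0\in\Lambda_m$ for some $m$. Such an $m$ also gives $\Lambda_k\subseteq\Lambda_{k+m+1}$, because adding the representation of $0$ in the low positions shifts any digit string.

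\emph{Finding $0\in\Lambda_m$ (main obstacle).} Since $a+b+c=0$, it suffices to partition $\{0,\dots,m\}$ into three index sets $I_a,I_b,I_c$ with $\sum_{I_a}\theta^j=\sum_{I_b}\theta^j=\sum_{I_c}\theta^j$. Then $\sum_j\theta^jf_j=s\,(a+b+c)=0$ for arbitrary $a,b$. This is where the specific polynomials enter. For the plastic number, $\beta^3=\beta+1$ gives $\beta^4+1=\beta^3+\beta^2=\beta^5$, so the index sets $\{5\},\{0,4\},\{2,3\}$ have equal sums. However, index $1$ is left uncovered. I would repair this by finding a second balanced triple that covers the missing positions, or by searching for a balanced partition of some full range $\{0,\dots,m\}$, for instance $\{0,\dots,11\}$. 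Such a search is finite and mechanical in $\mathbb{Z}[\beta]$. The same search would be done for $\theta^3=\theta^2+1$, where $\theta^3=\theta^2+1$ and $\theta^4=\theta^3+\theta$ give candidate identities. I expect this combinatorial step to be the real work.

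\emph{Relative density and Meyer property.} Here $a,b,c$ are real, so $\Lambda\subset\RR$. Let $T=[\min F,\max F]\cdot\kappa$ for a suitable $\kappa$. Because $\theta<3/2$, the interval $\theta T$ is covered by the three translates $T+F$, as in the proofs of Propositions~\ref{p1} and \ref{p4}. This requires that the gaps in $F$ are small relative to $|T|$; if necessary, apply a translation so that $0$ lies inside $T$. Iterating gives $\theta^mT\subset T+\Lambda_{m-1}$, and hence $T+\Lambda=\RR$. Finally, $\Lambda$ lies in the ring of integers of $K[\theta]$, and $\theta$ is a Pisot unit satisfying \eqref{realcond} in its own field. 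Closed discreteness is checked as in the proof of Theorem~\ref{t3}: points outside the ball $B$ move outward, so each bounded region meets only finitely many $\Lambda_m$. Theorem~\ref{t3} then gives that $\Lambda$ and $\Lambda-\Lambda$ are uniformly discrete, so $\Lambda$ is a Delone set and, by the Lagarias criterion, a Meyer set.
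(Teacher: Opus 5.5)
\paragraph{The missing step.} The gap is the step you flag yourself. The entire content of this proposition is that $0$ is a periodic point of $\{g_a,g_b,g_c\}$, and your proposal never establishes it.

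Your reduction is correct. A return word for $0$ of length $m+1$ that works for all $a,b$ is exactly a partition of $\{0,\dots,m\}$ into $I_a,I_b,I_c$ with equal sums of $\theta^j$. For linearly independent $a,b$ it is also necessary. This is precisely what the paper's computer-found words $g_c^4g_bg_cg_bg_a(0)=0$ and $g_c^8g_ag_cg_b^2g_a(0)=0$ encode.

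But existence is not automatic: the paper reports that no periodic representation of $0$ was found for the second and third Pisot numbers. An unexecuted ``finite search'' with no a priori bound on $m$ is therefore not a proof. The missing identities are:
\begin{itemize}
\item for $\theta^3=\theta^2+1$: $\theta^7=\theta^6+\theta^4=\theta^5+\theta^3+\theta^2+\theta+1$, all equal to $4\theta^2+2\theta+3$;
\item for $\beta^3=\beta+1$: $\beta^{12}+\beta^8=\beta^{11}+\beta^{10}=\beta^9+\beta^7+\beta^6+\beta^5+\beta^4+\beta^3+\beta^2+\beta+1$, all equal to $9\beta^2+12\beta+7$.
\end{itemize}

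Your suggested range $\{0,\dots,11\}$ cannot work. A balanced partition forces the coordinates of $\sum_{j\le m}\beta^j$ in the basis $\beta^2,\beta,1$ to be divisible by $3$, but $\sum_{j\le 11}\beta^j=20\beta^2+27\beta+16$. The first admissible length is $m=12$, and $m=7$ for the fourth Pisot number. This also confirms the paper's remark that there are no shorter words.

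A smaller slip: to get $\Lambda_k\subseteq\Lambda_{k+m+1}$, the return word must occupy the \emph{high} positions $k+1,\dots,k+m+1$. Putting it in the low positions yields $\theta^{m+1}x$, not $x$.

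\paragraph{Closed discreteness.} Your argument here is wrong as written. Once $0$ is periodic, every point of $\Lambda$ lies in infinitely many $\Lambda_m$. Moreover, the fact that points outside $B$ move outward gives no bound on how many orbit points remain inside $B$.

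The correct reason is the Pisot property. In your setting $K=\mathbb{Q}(\theta)$, each $\lambda=\sum\theta^jf_j$ lies in $\mathcal{O}_K$. Its images under the two non-real embeddings $\sigma$ have modulus at most $\max_{f\in F}|\sigma(f)|/(1-|\sigma(\theta)|)$. Since $\mathcal{O}_K$ is a lattice in $\RR\times\mathbb{C}$, only finitely many such $\lambda$ lie in a bounded interval. This is the finiteness that Theorem~\ref{t3} imports from \cite{BaMe}; the paper simply invokes that theorem.

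\paragraph{Relative density.} You read $a,b\in K[\theta]\subset\RR$ literally and prove relative density on the line. That argument is fine: with $\kappa=1/(\theta-1)$ it needs only $\theta<2$. However, the proposition is meant for the planar setting of this section, with non-collinear $a,b,c$ as in Figure~\ref{fig1}. There relative density comes from the triangle argument of Proposition~\ref{p4}, which is where $\theta<3/2$ is actually used. Your interval construction does not cover that case.
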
 

\noindent {\it Proof. } In view of Theorem \ref{t3} and Proposition \ref{p4}, we need only show that $0$ is a periodic point of the iterated function system $\{ g_a,g_b,g_c\} .$ Then all points in the recursion will repeat periodically, which implies the  formula for $\Lambda .$ The periodic representations
\[ g_c^8g_ag_cg_b^2g_a(0)=0  \quad \mbox{ for the plastic number and } \]
\[ g_c^4g_bg_cg_bg_a(0)=0  \quad \mbox{ for the fourth Pisot number } \]
were found with the algorithm of \cite{BaMe} by computer. We show how the second one can be found by hand. Writing $(x_1,x_2,x_3)$ for $x_1\theta^2+x_2\theta+x_3$, and using $\theta^3=\theta^2+1,$  the map $g_v$ with $v\in\{ a,b,c\}$ has the form 
\[ g_v(x_1,x_2,x_3)=(x_1+x_2,x_3,x_1+v). \quad\mbox{ Thus }\quad g_v^{-1}(y_1,y_2,y_3)=(y_3-v, y_1-y_3+v, y_2). \]
Using the relation $a+b+c=0$ we obtain  $g_c^{-1}(0,0,0)=(-c,c,0), \ g_c^{-1}(-c,c,0)=(-c,0,c),$ and $g_c^{-1}(-c,0,c)=(0,-c,0).$  While the choice $v=c$ in the first step was arbitrary, it was necessary in the other two steps in order to keep the result simple. For the next two steps there are again different choices. $g_c^{-1}(0,-c,0)=(-c,c,-c), \ g_b^{-1}(-c,c,-c)=(a,b,c), \ g_c^{-1}(a,b,c)=(0,a,b).$ Now $g_b^{-1}$ and  $g_a^{-1}$ lead back to $(0,0,0).$ In this case there are eleven other periodic representations of period 8. Apparently there are no shorter ones, however. \hfill $\Box$\smallskip

\begin{figure}[h!t]
\begin{center}
\includegraphics[width=0.495\textwidth]{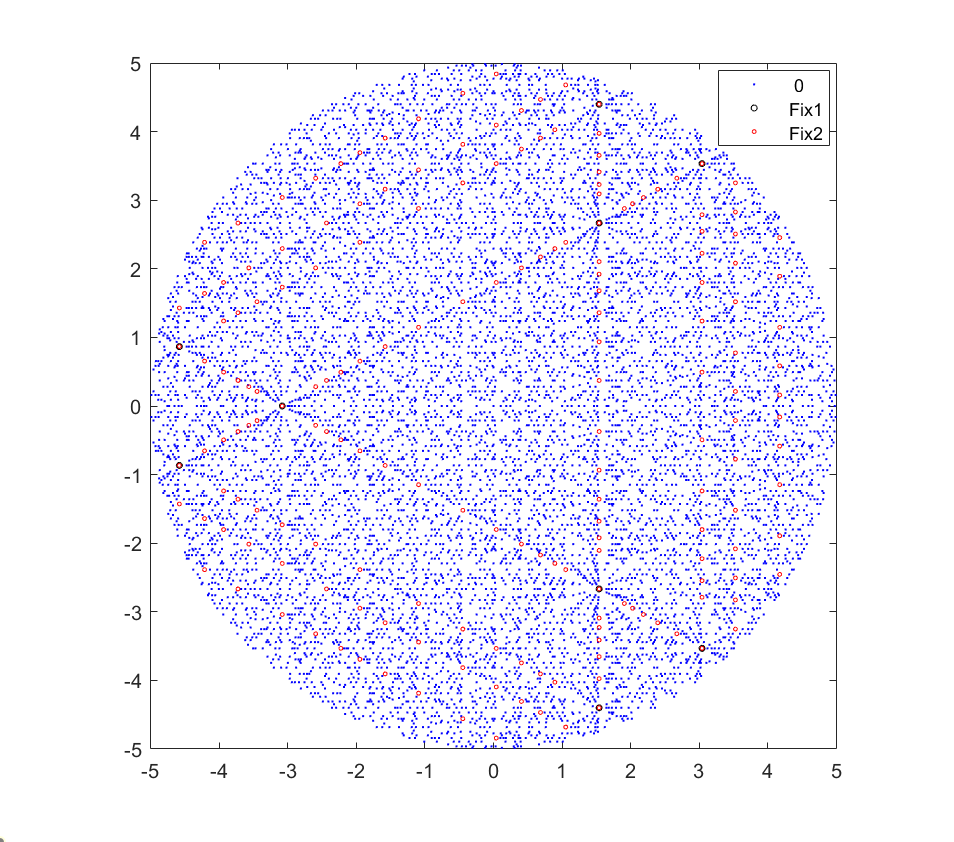} \   \includegraphics[width=0.47\textwidth]{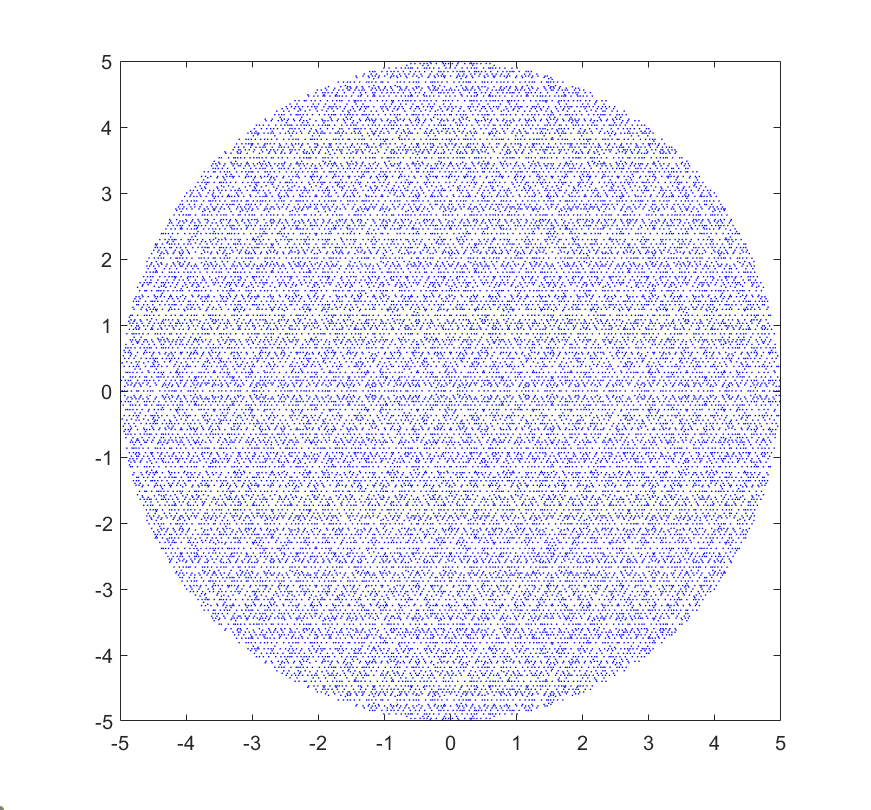} 
\end{center}
\caption{Patterns with three-fold symmetry for the plastic number $\beta\approx 1.3247$ restricted to the disk $|z|<5.$ Left: Solutions of \eqref{sel3} based on the initial point 0 (11000 blue points), the fixed points of the maps $g_k$ ($3\times 3$ black circles), and all fixed points of the mappings $g_kg_j$ (183 red circles). \  Right: The construction of Proposition \ref{p5} with $H=\{ 0,a,b,c\}$ and initial point 0 shows hexagonal symmetry (23000 points). }\label{fig1}      
\end{figure}

For the plastic number $\beta,$ the set $\Lambda$ obtained from the initial point 0 is shown on the left of Figure \ref{fig1}. There are around 11000 points with modulus smaller than 5. For the fourth Pisot number, this circle contained only 1159 points. Simulations for the second and third Pisot number gave extremely dense patterns, but no periodic representation of 0 was found. 

As in Example \ref{ex1}, we did also try to use the fixed points of $g_a,g_b,g_c$ as initial points for the recursion of the mappings. They generate triangular patterns in the plane which are not relatively dense. For $a=1,$ the fixed point is $-1/(\beta-1)\approx -3.07,$ and only three points of the generated set marked by black circles are seen in our circular window. When we start the recursion with the fixed points of the mappings $g_kg_j,$ we obtain the 183 points marked in red.  Actually these fixed points are rationals with denominator $\beta^2-1.$  The minimal polynomial for $\beta$ is $\beta^3-\beta-1,$ however. Thus $\beta(\beta^2-1)=1,$ and $1/(\beta^2-1)$ is an algebraic integer in this case.  

There seem to be many other solutions of $\Lambda= g_a(\Lambda)\cup g_b(\Lambda)\cup g_c(\Lambda)$ which can be studied by calculations in the spirit of \cite{BaMe}.  The solution $\Lambda$ with starting point 0 can be considered as a two-dimensional analogon of $S(\theta,1)$ in \eqref{Sm}.  We can also construct a set which corresponds to the difference set $R(\theta,1)$ in \eqref{Rm}.

\begin{Proposition}\label{p5}
 For each Pisot number $\theta<3/2$ and non-collinear points $a,b,c$ in the plane with $a+b+c=0,$ there is a solution $\Lambda$ of the equation $\Lambda= \theta\Lambda +H$ with $H=\{ 0,a,b,c\}$ which is a Meyer set. It can be written as 
 \[  \Lambda=\bigcup_{m=1}^\infty \Lambda_m\quad\mbox{ with } \quad \Lambda_m= H+\theta H +...+\theta^m H \ .\]
\end{Proposition}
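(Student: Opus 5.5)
Since $0\in H$, the sets $\Lambda_m=H+\theta H+\cdots+\theta^m H$ increase with $m$, and $\Lambda$ is defined as their union. We check the equation, relative density, and uniform discreteness of $\Lambda$ and $\Lambda-\Lambda$ in turn.

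\emph{The equation.} Because $0\in H$, each $\Lambda_m$ contains $0$, and we have the exact identity $\theta\Lambda_m+H=\Lambda_{m+1}$. Taking the union over $m$ gives $\theta\Lambda+H=\bigcup_{m\ge 2}\Lambda_m=\Lambda$, using again that the $\Lambda_m$ are nested. So here no periodic-orbit argument is needed: the role played in Proposition \ref{p4a} by the periodic representation of $0$ is taken over by the digit $0\in H$, because $g_0(0)=0$ is a fixed point.

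\emph{Relative density.} We repeat the proof of Proposition \ref{p4}. Let $T$ be the convex hull of $2\{a,b,c\}$. Then $\theta T\subset T+\{a,b,c\}\subset T+H$ for $\theta<3/2$. Iterating gives $\theta^m T\subset T+\Lambda_{m-1}$. Since $\bigcup_m\theta^mT$ is the whole plane, $T+\Lambda=\RR^2$. This step does not depend on arithmetic properties of $a,b,c$.

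\emph{Uniform discreteness and the Meyer property.} This is the main obstacle: Theorem \ref{t3} requires $H$ to consist of algebraic integers in a field $K$ containing $\theta$ for which \eqref{realcond} holds, and arbitrary non-collinear $a,b,c$ need not be algebraic. We reduce this with Proposition \ref{p3}. Let $A$ be the real linear map sending $a,b$ to $1,\omega$. Then $c$ goes to $-1-\omega=\omega^2$, and $A\Lambda$ is the analogous union built from $H'=\{0,1,\omega,\omega^2\}\subset\ZZ[\omega]$. By Proposition \ref{p3}, it suffices to treat this case.

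Now $A\Lambda$ lies in the ring of integers $R$ of $K=\mathbb{Q}(\theta,\omega)$. To apply Theorem \ref{t3}, we must check three things.
\begin{enumerate}
\item[(i)] $\theta$ is a unit. Each of the four Pisot numbers below $3/2$ has constant coefficient $\pm1$ in its minimal polynomial, which can be checked from the explicit list of Dufresnoy and Pisot.
\item[(ii)] Condition \eqref{realcond} holds. The conjugates of $\theta$ under embeddings of $K$ are just the Galois conjugates of $\theta$, and these have modulus $<1$ apart from $\theta$ itself, because $\theta$ is Pisot.
\item[(iii)] $A\Lambda$ is closed and discrete. Each bounded region contains only points of $\Lambda_m$ for bounded $m$, since $|\theta^m h|$ grows geometrically. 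Alternatively, we can invoke the argument inside the proof of Theorem \ref{t3}: the set lies within the maximal closed discrete solution $\Lambda^*$.
\end{enumerate}

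The subtle point in (iii) is that Theorem \ref{t3} presupposes discreteness. The cleanest route is to note that $A\Lambda\subset\Lambda^*$, the maximal closed discrete solution in $R$ from \cite{BaMe}. This holds because $0$ is a fixed point of $g_0$, so $0$ is among the periodic initial values generating $\Lambda^*$.

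Theorem \ref{t3} then shows that $A\Lambda$ and $A\Lambda-A\Lambda$ are uniformly discrete. Together with relative density, the Lagarias criterion makes $A\Lambda$ a Meyer set. Proposition \ref{p3}(i) transfers the Meyer property back to $\Lambda$.
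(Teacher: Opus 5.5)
\textbf{Overall.} Your proof is correct, but for the Meyer property you take a different route from the paper.

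\textbf{Comparison with the paper.} The paper does not use Theorem~\ref{t3} here. It writes $\lambda=\sum\epsilon_k\theta^k\eta_k$ with $\eta_k\in\{a,b,c\}$ and substitutes $c=-(a+b)$. This gives $\Lambda\subseteq aR+bR$ with $R=R(\theta,1)$. It then uses only the one-dimensional fact that $R(\theta,1)$ is a Meyer set. Since $a,b$ are linearly independent, $aR+bR$ is a linear image of $R\times R$. Hence $aR+bR$ and $a(R-R)+b(R-R)$ are uniformly discrete, and $R-R\subseteq R+G$ gives the Meyer property.

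That argument is more elementary. It needs no number field containing $\omega$, no hypotheses of Theorem~\ref{t3} to verify, no discreteness hypothesis, and no normalization of $a,b$. Your route normalizes to $\{0,1,\omega,\omega^2\}$ via Proposition~\ref{p3} and applies Theorem~\ref{t3} in $K=\mathbb{Q}(\theta,\omega)$. It buys something different: it shows the example is an instance of the general theorem of Section~\ref{s3} and places $A\Lambda$ inside the cut-and-project set $\Lambda^*$. The price is relying on \cite{BaMe} both for Theorem~\ref{t3} and for discreteness. Your explicit check $\theta\Lambda_m+H=\Lambda_{m+1}$ of the equation, which the paper leaves implicit, is a welcome addition.

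Two small remarks on your checks. Item (i) is automatic, since a Pisot number $\theta<2$ has $|N(\theta)|=\theta\prod|\theta'|<2$. Your check of (ii) as written holds in any field containing a Pisot number. What actually makes $K=\mathbb{Q}(\theta,\omega)$ suitable is that $[K:\mathbb{Q}(\theta)]=2$, so the identity and complex conjugation are the only embeddings fixing $\theta$.

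\textbf{The first argument in (iii) does not work.} You claim that a bounded region meets only $\Lambda_m$ for bounded $m$ because $|\theta^m h|$ grows. The claim itself is equivalent to local finiteness, but growth of the leading term does not prove it. Because $a+b+c=0$, the lower digits can cancel the leading one. For the fourth Pisot number, the periodic representation in Proposition~\ref{p4a} reads $\theta^7a+\theta^6b+\theta^5c+\theta^4b+(\theta^3+\theta^2+\theta+1)c=0$.

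Discreteness comes from the arithmetic of $\theta$, not from growth, so you should drop that sentence. Either keep your second argument ($0=g_0(0)$ is periodic, so $A\Lambda\subseteq\Lambda^*$), or argue directly. For the direct argument, note that $A\Lambda\subset\ZZ[\theta,\omega]$ and that $|\sigma(\lambda)|\le 1/(1-|\sigma(\theta)|)$ for every embedding $\sigma$ other than the identity and complex conjugation. Hence $A\Lambda$ lies in a cut-and-project set and is locally finite.
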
 

\noindent {\it Proof. } Proposition \ref{p4} says that $\Lambda$ is relatively dense. Let $R=R(\theta,1)$ defined in \eqref{Rm}. Every $\lambda\in\Lambda$ can be written as a finite sum
\[ \lambda = \sum \epsilon_k \theta^k \eta_k \]
where $\epsilon_k\in\{ 0,1\}$ and $\eta_k\in\{ a,b,c\} .$ Since $c=-(a+b),$ each $\lambda$ belongs to $aR+bR .$ Now $R$ is known to be a Meyer set, cf. \cite{Feng16} and Proposition \ref{p2}, and $a,b$ are linearly independent vectors. So $aR+bR$ and $(aR+bR)-(aR+bR)=a(R-R)+b(R-R)$ are uniformly discrete. Moreover, $R-R\subseteq R+G$ for a finite set $G$ implies $a(R-R)+b(R-R)\subseteq (aR+bR)+(aG+bG)$ where $aG+bG$ is also finite. Thus $aR+bR$ and its subset $\Lambda$ are Meyer sets. \hfill $\Box$\medskip

The result for the plastic number $\beta$ can be seen in Figure \ref{fig1} on the right, with a very clear hexagonal symmetry. The argument above, with $4/3$ instead of $3/2,$ can be used to derive the following three-dimensional example.  The plastic number is the only Pisot number below $4/3.$ 

\begin{Proposition}\label{p5}
 For the plastic number $\beta$ and points $a,b,c,d$ in $\RR^3$ which are not on a common plane and fulfil $a+b+c+d=0,$ there is a solution $\Lambda$ of the equation $\Lambda= \beta\Lambda +H$ with $H=\{ 0,a,b,c,d\}$ which is a Meyer set. It can be written as 
 \[  \Lambda=\bigcup_{m=1}^\infty \Lambda_m\quad\mbox{ with } \quad \Lambda_m= H+\beta H +...+\beta^m H \ .\]
\end{Proposition}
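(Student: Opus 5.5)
The plan is to copy the proof of the planar Proposition with $H=\{0,a,b,c\}$ almost word for word, adding one dimension. There are three steps: show that the union $\Lambda=\bigcup\Lambda_m$ solves the equation, show that it is relatively dense, and show that it is a Meyer set by embedding it in a lattice-like set built from $R=R(\beta,1)$.

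\emph{Solution.} Since $0\in H$, we have $\Lambda_m\subseteq\Lambda_{m+1}$. Directly from the definition,
\[
\beta\Lambda_m+H=\Lambda_{m+1},
\]
so $\beta\Lambda+H=\Lambda$. Moreover $0\in\Lambda$, and every point of $\Lambda$ is a finite sum $\sum\epsilon_k\beta^k\eta_k$ with $\epsilon_k\in\{0,1\}$ and $\eta_k\in\{a,b,c,d\}$.

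\emph{Relative density.} By Proposition \ref{p3}, both properties are invariant under linear maps, so we may take $a,b,c,d$ to be the vertices of a regular tetrahedron with centroid $0$. Let $T$ be the convex hull of $3\{a,b,c,d\}$. The key identity to check is
\[
T+\{a,b,c,d\}=\tfrac43 T .
\]
The inclusion $\supseteq$ is the main point. A point of $\tfrac43T$ has barycentric coordinates $\mu_i\ge0$ with $\sum\mu_i=1$, that is, $x=4\sum\mu_i v_i$. Pick $j$ with $\mu_j\ge 1/4$ and write $x=v_j+(x-v_j)$. Using $\sum v_i=0$, we can rewrite $x-v_j=3\sum\nu_i v_i$ with $\nu_i=\tfrac43\mu_i-\tfrac13\delta_{ij}$ and $\sum\nu_i=1$. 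Then $\nu_j\ge0$ exactly because $\mu_j\ge1/4$, so $x-v_j\in T$. The reverse inclusion holds by convexity, since $v_i\in\tfrac13T$.

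Since $\beta<4/3$ and $0$ lies in the interior of $T$, we get $\beta T\subset T+H$. Iterating as in the proof of Proposition \ref{p4} gives
\[
\beta^m T\subset T+(H+\beta H+\cdots+\beta^{m-1}H)\subseteq T+\Lambda .
\]
Because the sets $\beta^mT$ exhaust $\RR^3$, it follows that $T+\Lambda=\RR^3$. The only genuinely new verification in the whole proof is this barycentric identity, which replaces $T+F=\tfrac32T$.

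\emph{Meyer property.} Using $d=-(a+b+c)$, every $\lambda\in\Lambda$ lies in $aR+bR+cR$, where $R=R(\beta,1)$. By Proposition \ref{p2} (or Feng), $R$ is a Meyer set, so $R-R\subseteq R+G$ for some finite $G$, and $R-R$ is uniformly discrete. Since $a,b,c$ are linearly independent, the map $(r,s,t)\mapsto ar+bs+ct$ is a linear isomorphism of $\RR^3$. Under it the product set $R^3$, which is uniformly discrete with uniformly discrete differences, maps onto $aR+bR+cR$. Hence $aR+bR+cR$ and its difference set are uniformly discrete. Moreover,
\[
(aR+bR+cR)-(aR+bR+cR)\subseteq (aR+bR+cR)+(aG+bG+cG),
\]
and $aG+bG+cG$ is finite. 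So $\Lambda$ is a uniformly discrete, relatively dense subset of a set satisfying the Meyer inclusion. Its difference set is therefore uniformly discrete, and by Lagarias' criterion, or directly, $\Lambda$ is a Meyer set.
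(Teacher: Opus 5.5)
Your proposal is correct and follows the route the paper indicates: you rerun the planar argument with $T=\mathrm{conv}(3\{a,b,c,d\})$, the identity $T+\{a,b,c,d\}=\tfrac43T$ in place of $\tfrac32T$, and $\beta<4/3$ to get relative density, and then embed $\Lambda$ in $aR+bR+cR$ with $R=R(\beta,1)$ to get the Meyer property. Your barycentric verification supplies the one detail the paper leaves implicit; the identity itself does not need $\sum v_i=0$, which enters only through $0$ lying in the interior of $T$ and through $d=-(a+b+c)$.
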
 \medskip

{\bf Acknowledgement. }  Yves Meyer gratefully acknowledges stimulating discussions with Yann Bugeaud, involving in particular the hint to Feng's theorem.

\medskip\noindent
Institute of Mathematics, University of Greifswald, Germany, \url{bandt@uni-greifswald.de}\vspace{1ex}\\
Acad\'emie des Sciences, 23 quai de Conti, 75006 Paris, France, \url{yves.meyer305@orange.fr}
 
\end{document}